\documentclass[11pt]{amsart}

\usepackage{amsgen,amsthm,amstext,amsbsy,amsopn,amsfonts,amssymb, enumerate,xcolor}

\usepackage{amsmath}

\newcommand\la{\langle}
\newcommand\ra{\rangle}

\newcommand\hh{{\mathfrak h}}

\newcommand\nn{{\mathfrak n}}

\newcommand\vv{{\mathfrak v}}

\newcommand\zz{{\mathfrak z}}

\newcommand\iso{{\mathfrak{iso}}}

\newcommand\CC{\mathbb C}

\newcommand\RR{\mathbb R}

\newcommand\Auto{\operatorname{Auto}}

\newcommand\Iso{\operatorname{Iso}}

\DeclareMathOperator{\En}{En}

\DeclareMathOperator{\arccosh}{arccosh}
\DeclareMathOperator{\sech}{sech}

\theoremstyle{plain}

\newtheorem{thm}{Theorem}[section]
\newtheorem{lem}[thm]{Lemma}
\newtheorem{prop}[thm]{Proposition}
\newtheorem{cor}[thm]{Corollary}

\theoremstyle{definition}

\newtheorem{rem}[thm]{Remark}
\newtheorem{example}[thm]{Example}

\newcounter{casenum}
\newenvironment{caseof}{\setcounter{casenum}{1}}{\vskip.5\baselineskip}
\newcommand{\case}[2]{\vskip.5\baselineskip\par\noindent {\bfseries Case \arabic{casenum}:} #1\\#2\addtocounter{casenum}{1}}

\begin{document}
	
	\title[Magnetic equations on the Heisenberg group]
	{Magnetic equations  on the Heisenberg group: symmetries, solutions and the inverse problem of the calculus of variations}

	\author{Gabriela P. Ovando, Mauro Subils}
	
	\thanks{{\it (2000) Mathematics Subject Classification}: 53C99, 70G65, 70F17, 22E25}
	
	\thanks{{\it Key words and phrases}: Magnetic trajectories, Inverse problem, Symmetries, Heisenberg Lie group.
	}
	
	\thanks{Partially supported by  ANPCyT, SCyT (UNR) and CONICET}
	
	\address{ Departamento de Matem\'atica, ECEN - FCEIA, Universidad Nacional de Rosario.   Pellegrini 250, 2000 Rosario, Santa Fe, Argentina.}
	
	\
	
	\email{gabriela@fceia.unr.edu.ar}
	
	\email{subils@fceia.unr.edu.ar}
	

	\begin{abstract}
	The Heisenberg Lie group $H_3$ is modeled on the differentiable structure of $\RR^3$ but equipped with  another non-commutative  product operation. 	By fixing the usual metric on the Heisenberg Lie group, this work provides a comprehensive overview of the behavior of magnetic geodesics for any invariant Lorentz force.  After writing the magnetic equations, we found symmetries that enable the explicit computation of  the magnetic trajectories  for any homogeneous exact and non-exact magnetic form. Finally we show that these magnetic
	trajectories are solutions of a variational problem: we present explicit examples of Lagrangians. 
	\end{abstract}
	
	\maketitle
	
	\setcounter{tocdepth}{1}
	\tableofcontents

	\noindent\section{Introduction}
	“The Heisenberg group was born long before it was christened.” This is the opening sentence of the book about Harmonic Analysis written by G. Folland in 1989 \cite{Fo}. As he said, the object with the older history is
	the Heisenberg Lie algebra $\hh_{n}$; it appeared  in the Poisson 	bracket relations of canonical coordinates in Hamiltonian mechanics (Jacobi,
	1843) and then in the canonical commutation relations of quantum 	mechanics (Born and Jordan, 1925).  See references in the very nice survey \cite{Fo2}. Indeed, the Heisenberg group is related to a wide spectrum of mathematical phenomena, in harmonic analysis, subriemannian geometry and  in the mathematical theory of signal and 	image processing, to mention some diverse examples.  
	
		The Heisenberg group $H_3$ has dimension three and can be seen as a  group  acting transitively by isometries on $\RR^3$ whenever we change the Euclidean metric by  the metric given in usual coordinates at $p=(x,y,z)$ by:
	$$g=(1+\frac{y^2}4)  dx^2 + (1 + \frac{x^2}4) dy^2 + dz^2 + \frac{y}2 dx dz - \frac{x}2 dy dz.$$
	
	The action of the element $(a,b,c)\in H_3$ on $(x,y,z)\in \RR^3$ is given by the  product:
\begin{equation}\label{operation}(a,b,c)(x,y,z)= (x+a, y+b, z+c+\frac12 (ay-by)).
\end{equation}
	
	Since the action is transitive, $H_3$ is seen as a homogeneous manifold. In fact, it is  the low-dimensional (non-abelian) 2-step nilpotent Lie group. If we concentrate the attention to geometrical features, we note that the geodesic equations on $H_3$ were solved by Kaplan in \cite{Ka}, equations which are still true for 2-step nilmanifolds.   In the more general  context  of 2-step nilmanifolds, there are  many works on  geodesics and their flows studied by several authors, such as  \cite{dC,dM,Eb, KOS, LP,Mar, Ma}. These works make use of techniques of  Lie theory.  

	In the present work we focus on magnetic trajectories, also called   magnetic geodesics, on  the Heisenberg Lie group $H_3$. We consider magnetic 2-forms, $\omega_F$ which are invariant by the action above. Recall that magnetic geodesics are solutions of a magnetic equation
	\begin{equation}\label{mageq}
		\nabla_{\gamma'}{\gamma'}= q F\gamma'
	\end{equation}
	where    $\nabla$ is the corresponding Levi-Civita connection and $F$ is a skew-symmetric $(1,1)$-tensor such that  the corresponding 2-form $\omega_F= g(F\cdot ,\cdot)$ is closed. In particular, geodesics occur whenever $F=0$. Magnetic geodesics are known as {\em flowlines} of the dynamical system associated with the Lorentz force $F$. Equation \eqref{mageq} is also called a {\em Landau-Hall} differential equation in \cite{BC} and it is a  generalization of the Landau-Hall problem, that studies the motion  of a charged particle in the presence of a static magnetic field.  Examples of the equation above arise from the K\"ahler form \cite{Ad}, or from the contact magnetic field  on Sasakian manifolds \cite{CF}. 
	

	 Thus, the first step is to write down the magnetic equations on $H_3$ for any invariant Lorentz force $F$. In coordinates, the magnetic equations yield a 
	second order differential system of the form:
	\begin{equation}\label{douglaseq}
		y_i''=F_i(t,y_i,y_i') \qquad \mbox{ for all } i=1, 2,3,
	\end{equation}
	for some polynomial functions $F_i$, see Equation \eqref{magnetic-Heis}.


Two main goals inspire the present work:
\begin{enumerate}[(i)]
	 \item to solve the magnetic equations \eqref{mageq} for any invariant force $F$;
	\item to address the inverse problem of the calculus of variations:  to find a Lagrangian $L$ such that the corresponding Euler-Lagrange equations \eqref{douglaseq} are equivalent to  the magnetic equations. 
		 	
\end{enumerate}

Existence results of   magnetic trajectories have been  obtained using various methods, such as   heat flow techniques in  \cite{BH},  methods from dynamical systems and symplectic geometry, see for instance \cite{Ar, BP,Gi},  variational methods for multivalued functionals (Morse-Novikov-theory)\cite{NT, Tai}, tools from  Lie theory  \cite{BJ1,BJ2} and studies on specific manifolds \cite{EI,Sc}.

 A classical topic of research in geometry is to determine when two different metrics on the same manifold share the same geodesics.  For magnetic trajectories, the question was studied in \cite{BM}. Clearly, magnetic geodesics of $(g, \Omega)$ coincide up to reparametrization with those of the rescaled system $(\alpha g, \beta \Omega)$ for constants $\alpha>0, \beta \neq 0$.  A key difficulty in the case of magnetic geodesics, is that dilations on the parameter do not yield a solution of the corresponding magnetic equation. On the other hand,  the Lie group $H_3$ has a unique left-invariant metric up to homothety \cite{La}.  In the present paper we work two possible generalizations of this question. By fixing the metric, we  associate magnetic fields and trajectories under an equivalence relation. 
This is done under group's actions and it is our proposal for solving the magnetic equations for any invariant magnetic field. On the other side we determine which magnetic trajectories are geodesics. 

Let $\Iso(M)$ denotes the isometry group of the Riemannian manifold $M$, then $\Iso(M)\times \RR^*$ acts on the set of Lorentz forces and also, on the set of curves. Moreover, if $\gamma$ is a magnetic trajectory for the Lorentz force $F$, then $(\psi,r)\cdot\gamma$ is a magnetic trajectory for  $(\psi,r)\cdot F$, where $(\psi,r)\in \Iso(M)\times \RR^*$, see Lemma \ref{lem2}. A consequence is that the isotropy subgroup of this action gives symmetries of the magnetic equations, once one fixes the Lorentz force $F$. 

The Heisenberg Lie algebra $\hh_3$ has a basis $e_1,e_2,e_3$ in which  any Lorentz force is represented by a skew-symmetric matrix 
that, under the action above is equivalent to exactly one of the following matrices:
$$F=0,\qquad \quad a_{\rho}=\left( \begin{matrix}
	0 & -\rho  & -1\\
	\rho & 0 &0 \\
	1 & 0 & 0
\end{matrix}
\right), \quad \mbox{ for } \rho\geq 0\quad \mbox{ or }b=\left( \begin{matrix}
	0 & -1& 0\\
	1 & 0 &0 \\
	0 & 0 & 0
\end{matrix}
\right). 
$$
Solutions for $F=0$ are geodesics. For the Lorentz force $b$,  
 the magnetic curves were found in \cite{EGM, MN}, and the case $a_0$ was solved in \cite{OS}. This last case corresponds to harmonic 2-forms. Indeed we compute the solutions for non-equivalent forces.
 

\smallskip

{\bf Main results}

\begin{itemize}
	\item Theorem \ref{thmsolutionmagHeisII} in the present work  determines the solutions to question (i) as  $\gamma(t)=\exp(x(t)e_1+y(t)e_2+z(t)e_3)$ of the magnetic equation passing through the identity element  for any initial condition $\gamma'(0)=x_0 e_1+y_0 e_2 +z_0e_3$. Such curves are in one-to-one correspondence to functions
	$$	x:\RR\to\RR \quad \mbox{solutions of Equation} \quad x''(t)+h'(x(t)) h(x(t))=\rho, \, x(0)=0 \mbox{ and } x'(0)=x_0, $$
	where $h(x)=  \frac{x^2}{2}+(z_0+\rho)x+y_0+1$. In Table \ref{table:1} we give all non-trivial solutions $x(t)$. 
		The case $b$ above, corresponding to left-invariant exact  2-forms is special and isolated. 
		
	Later we  show that the only magnetic  geodesics which are geodesics are certain one-parameter subgroups, see Corollary \ref{mageo}. 
	
	\item Theorem \ref{Lagrangian}  solves question (ii) on the Heisenberg  group $H_3$. We provide  conditions for  a given function to become a  Lagrangian and propose an explicit example. The key insight is that any left-invariant closed 2-form on $H_3$ is exact, see Lemma \ref{closed2areexact}. 
\end{itemize}

The inverse problem of the calculus of variations requires a work backward from the  differential magnetic equations on $H_3$ to find a corresponding Lagrangian.  A first approach  is the use of variational integrating factors, a  procedure outlined by Douglas for two variables in \cite{Do}. 
More generally,  Helmholtz  conditions \cite{He}  assert the existence of a local  Lagrangian.   Applications of the inverse problem and the Helmholtz conditions include the 
control and feedback stabilization of mechanical systems, see the first chapters of  \cite{Ze}. On the other side, differential equations on the Heisenberg group is an active research field as it is shown in \cite{PP}. 

Here, we make  use of   tools such as those in \cite{Cretal} involving differentials of forms and vector fields on  manifolds \cite{Br}, that   provide a global Lagrangian. In fact, by writing the   Lagrangian as
$$L = \En + \theta$$
where $\En$ denotes
 the energy, we are able to give explicit 1-forms  $\theta$ on $TH_3$. In this way the magnetic trajectories obtained above  can be identified with the totality of extremals of a variational problem. To know more recent developments, see for instance \cite{DP}.

 The paper is organized as follows: Section 2 contains the  basis for the actions explained above, which are given on any Riemannian manifold. We present also basic features of the Heisenberg Lie group and its Lie algebra. Section 3 is devoted to the computations of the solutions of the magnetic geodesics on $H_3$ and in Section 4 we deal with the Inverse problem of the calculus of variations. 
 
 \smallskip 

The authors kindly thank Prof. R. Bryant for the generous suggestion to study question (ii) and for providing references to address it.

\section{Symmetries for the magnetic equation}	\label{symmetries}
In this section we find symmetries of the magnetic equation. This is achieved by considering an action on the set of Lorentz forces and on the set of differentiable curves of a Riemannian manifold.

Let $(M,g)$  denote a Riemannian manifold with Levi-Civita connection $\nabla$. Let $\Omega$ be a differentiable 2-form on $M$, then there exists a unique   skew-symmetric $(1,1)$-tensor $F:TM \to TM$  defined by the relation:
$$\Omega_F(U,V)=g(FU,V), \quad \mbox{ for all }\quad  U,V\in \chi(M).$$
Conversely, the relation above defines a 2-form whenever $F$ is given. If the 2-form is closed, then one gets an extra condition on $F$. In such case the tensor $F$ is called a  {\em  Lorentz force}. 

A differentiable {\em curve} or {\em trajectory} on $M$ is a differentiable function $\gamma:I\to M$ defined on some open interval such that $0\in I\subset\mathbb{R}$. A {\em magnetic trajectory} is a curve $\gamma$ satisfying Equation \eqref{mageq} for some Lorentz force $F$. 

Given a Lorentz force $F$ and a magnetic trajectory $\gamma$, one has
$$\frac{d}{dt}g(\gamma'(t), \gamma'(t))=2g(\nabla_{\gamma'(t)}\gamma'(t), \gamma'(t))=2g(F\gamma'(t), \gamma'(t))=0, $$
implying that magnetic  curves have  constant velocity.  


Note that a reparametrization of a magnetic curve could not be a solution of Equation \eqref{mageq}. In fact, take the curve $\tau(t)=\gamma(r t)$ with $r\neq 0,\,1$. Then it holds $\tau'(t)=r \gamma'(r t)$,  so that one has 
$$\nabla_{\tau'(t)}\tau'(t)= r^2 F\gamma'(r t),\mbox{ while on the other side } F \tau'(t)=r F \gamma'(r t).$$	
Denote by  $\mathcal{F}$ the set of Lorentz forces on $M$ and by $\mathcal{C}$ the set of all differentiable curves on $M$. 
Let  $\Iso(M)$ be the isometry Lie group of $M$ and let  $\mathbb{R}^*$ be the multiplicative group. 

There is a left-action of the product group  $\Iso(M) \times \RR^*$ on $\mathcal{F}$ given by:
\begin{equation}\label{action1}
	(\psi, r)\cdot F =r \psi_*\circ  F \circ  \psi^{-1}_*, \quad \mbox{for } \psi \in \Iso(M), r\in \RR^*, F\in \mathcal F, 
\end{equation}
where $\psi_*$ denotes the differential of $\psi$. 
The same group acts on the set $\mathcal{C}$ in the following way:
\begin{equation*}
	((\psi, r)\cdot \gamma)(t)=(\psi\circ\gamma)(rt), \quad \mbox{for } \psi \in \Iso(M),  r\in \RR^*, \gamma \in \mathcal C. 
\end{equation*} 	

Note that  the associated $2$-form of the $(1,1)$-tensor $(\psi, r)\cdot F$ is a multiple of the pullback by $\psi^{-1}$ of $\Omega_F$. This shows that $(\psi, r)\cdot F$ is a Lorentz force:  $(\psi, r)\cdot F\in\mathcal{F}$. It is easy to check that these left-actions are well defined. For instance take   $(\psi,r)$, $(\phi,s)\in\Iso(M)\times\mathbb{R}^*$ and $\gamma\in\mathcal{C}$ then one has 
$$((\psi,r)\cdot((\phi,s)\cdot\gamma))(t)=\psi\circ((\phi,s)\cdot\gamma))(rt) =(\psi\circ\phi\circ\gamma)(srt)=((\psi\circ\phi,rs)\cdot\gamma)(t). $$
Once the left-action of a group $H$ on a set $S$ is defined, recall that the {\em isotropy subgroup} for $s\in S$ is the subgroup of $H$ given by $$H_s=\{h\in H \,:\, h\cdot s=s\}, $$
while the orbit of $s\in S$ is the subset  defined by $H\cdot s=\{h\cdot s \, : \, h\in H\}$. 

\begin{lem} \label{lem2} Let $(M,g)$ be a Riemannian manifold, and let $F$ denote a Lorentz force on $M$. Consider
	the group $H=\Iso(M)\times\mathbb{R}^{*}$. 
	\begin{enumerate}[(i)]
		\item If $\gamma$ is a magnetic trajectory for the Lorentz force $F$ then for any $(\psi,r)\in\Iso(M)\times\mathbb{R}^{*}$, the curve $(\psi,r)\cdot\gamma$ is a magnetic trajectory for the Lorentz force $(\psi,r)\cdot F$. 
		\item  Let  $H_F$ be the isotropy subgroup for the Lorentz force $F$. Then $H_F$ is a group of symmetries for  the magnetic equation $\eqref{mageq}$. 
		
		Furthermore,  $H_F$ consists of elements $(\psi,r)$, with $r=\pm 1$ and  such that
		\begin{equation}\label{symmetry}\psi_* \circ F \circ \psi_*^{-1}=F\quad \mbox{ or } \quad \psi_* \circ F \circ \psi_*^{-1}= -F.
		\end{equation}
	\end{enumerate}
\end{lem}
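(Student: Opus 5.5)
For (i) the plan is a direct computation with the two standard facts about an isometry $\psi$: it maps geodesics of the Levi-Civita connection to geodesics, and more precisely $\psi_*(\nabla_X Y)=\nabla_{\psi_*X}\psi_*Y$. Put $\tau(t)=(\psi\circ\gamma)(rt)$. Then $\tau'(t)=r\,\psi_*\gamma'(rt)$. Since $\nabla$ is natural under isometries and the reparametrization $t\mapsto rt$ contributes one more factor $r$ (as in the computation preceding the statement), we get
$$\nabla_{\tau'}\tau'(t)=r^2\,\psi_*\bigl(\nabla_{\gamma'}\gamma'\bigr)(rt)=r^2\,\psi_*F\gamma'(rt).$$
On the other side,
$$\bigl((\psi,r)\cdot F\bigr)\tau'(t)=r\,\psi_*F\psi_*^{-1}\bigl(r\,\psi_*\gamma'(rt)\bigr)=r^2\,\psi_*F\gamma'(rt).$$
These agree, so $\tau$ is a magnetic trajectory for $(\psi,r)\cdot F$. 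We already know $(\psi,r)\cdot F\in\mathcal F$ from the pullback remark. The charge $q$ is absorbed into $F$, or carried along unchanged.

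For the first claim of (ii), take $(\psi,r)\in H_F$, so that $(\psi,r)\cdot F=F$. By (i), $(\psi,r)$ maps solutions of \eqref{mageq} to solutions of the same equation. It is invertible inside $H_F$, so it is a bijection of the solution set, that is, a symmetry. The only remaining task is to describe $H_F$.

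Being in the isotropy group means $r\,\psi_*F\psi_*^{-1}=F$. It therefore suffices to show $r=\pm1$: then $\psi_*F\psi_*^{-1}=rF=\pm F$, which is exactly \eqref{symmetry}. Conversely, if $\psi$ satisfies \eqref{symmetry} with sign $\epsilon$, then $(\psi,\epsilon)\in H_F$. To force $|r|=1$, I would use the pointwise invariant $N(p)=-\operatorname{tr}(F_p^2)=|F_p|^2\ge 0$. Since $\psi_{*}$ is a linear isometry $T_pM\to T_{\psi(p)}M$, the isotropy relation evaluated at $\psi(p)$ gives
$$N(\psi(p))=r^2N(p).$$

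This is the main obstacle. The identity alone does not give $r^2=1$ for an arbitrary Lorentz force. Indeed, on $\RR^2$ with $F=e^{x}J$ and $\psi$ the translation by $\log r$ in $x$, the pair $(\psi,r)$ fixes $F$ for every $r>0$. So the step needs a hypothesis: $F\neq0$ and $|F|$ constant. In the setting the paper uses, $F$ is left-invariant on $H_3$ and hence has constant nonzero norm. Under this hypothesis the identity gives $r^2=1$ at once. More generally it suffices that $N$ is bounded and bounded away from zero, since iterating $\psi^{\pm1}$ gives $N(\psi^k(p))=r^{2k}N(p)$. For $F=0$ every $r$ is allowed, and the condition \eqref{symmetry} holds trivially. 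I would state the restriction explicitly when writing the proof.
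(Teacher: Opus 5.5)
Your proof is correct, and in substance it is the paper's argument. Part (i) and the symmetry claim are the ``usual computations'' the paper leaves to the reader. For $r=\pm1$ the paper also compares a conjugation-invariant norm of $F$: it uses the operator norm $\sup_{\|v\|=1}\|F_pv\|$ where you use $-\operatorname{tr}(F_p^2)$, and that choice does not matter.

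The real difference is in how base points are tracked. The paper's displayed chain writes $F_p$ inside $\psi_*F\psi_*^{-1}$. But for $v\in T_pM$ this operator evaluates $F$ at $\psi^{-1}(p)$. So the computation actually yields $\|F_p\|=|r|\,\|F_{\psi^{-1}(p)}\|$, which is exactly your relation $N(\psi(p))=r^2N(p)$. The conclusion $|r|=1$ then silently assumes two things: that $\|F\|$ takes the same value at $p$ and $\psi^{-1}(p)$, and the unjustified assertion that $F_p\neq 0$ at every point.

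Your example is a valid counterexample. $F=e^xJ$ on $\RR^2$ is a Lorentz force, since every 2-form on a surface is closed, and with $\psi$ the translation by $(\log r,0)$ you get $(\psi,r)\in H_F$ for every $r>0$. $F=0$ is a second exception. So the ``Furthermore'' clause is false for general Lorentz forces, and the hypothesis you add is exactly what the argument needs: $F\neq0$ of constant norm, or as you note, $N$ bounded and bounded away from zero.

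Left-invariant forces on a Lie group with a left-invariant metric have constant norm. That covers every use the paper makes of the lemma: the corollary for nilpotent groups and Proposition~\ref{orbits}. Your restricted formulation is therefore the accurate version of the statement.
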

The proof of (i) in the  Lemma follows from usual computations. 	The second statement is a consequence from the definitions.

To complete the proof, let $F$ be a Lorentz force of $M$ and let $(\psi,r)\in H_F$. As mentioned above $F_p : T_p M \to T_p M$ is a non-zero skew-symmetric map at every tangent space for any $p\in M$. As usual denote by  $\Vert v\Vert$  the norm of $v\in T_pM$:  $\Vert v\Vert=g_p(v,v)^{1/2}$.  It follows that  if  $(\psi, r)$  is an element of  the isotropy subgroup for $F\neq 0$,  it must hold
\begin{equation*}
	0< \sup_{\Vert v\Vert=1}\Vert F_p(v)\Vert = \sup_{\Vert v\Vert=1}|r| \, \Vert \psi_* F_p \psi^{-1}_*(v)\Vert= |r|\sup_{\Vert v\Vert=1} \Vert F_p \psi^{-1}_*(v)\Vert = |r| \sup_{\Vert v\Vert=1}\Vert F_p (v)\Vert,  
\end{equation*}
which implies  $r=1$ or $r=-1$. This says that $(\psi,r)\in H_F$ for a Lorentz force $F$ if and only if Equations  \eqref{symmetry} hold.



Next we concentrate on nilpotent Lie groups. Recall that they correspond to  nilpotent Lie algebras. 

Denote by $\nn$ a Lie algebra, and consider the lower central series defined as follows
$$\nn_0=\nn, \quad \nn_j=[\nn, \nn_{j-1}]\quad \mbox{ for }j\geq 1.$$
The Lie algebra $\nn$ is called nilpotent if there exists $k>0$ such that $\nn_k=0$. The smallest $k$  is called the step of nilpotency for  the nilpotent Lie algebra. We assume here that  $\nn$ is non-abelian. 

Indeed,  the simpler algebraic examples of nilpotent Lie algebras are the 2-step ones verifying $[U,[V,W]]=0$ for all $U,V,W\in \nn$. 

\begin{example}
	The Heisenberg Lie algebra $\hh_3$ is a 2-step nilpotent Lie algebra. It has dimension three and it is spanned by vectors $e_1, e_2, e_3$ satisfying the non-trivial Lie bracket relation
	$$[e_1,e_2]=e_3.$$
	The corresponding Lie group is called the Heisenberg Lie group, denoted by $H_3$, which is modeled on $\RR^3$ endowed with the  product operation of Equation \eqref{operation}. 
\end{example}

Assume now that the manifold is a nilpotent Lie group equipped with a metric $(N,\la\,,\,\ra)$ for which translations on the left by elements of the group are isometries,  called a left-invariant metric.
In this context, the isometries that are compatible with the underlying algebraic structure are the isometric automorphisms, usually called orthogonal automorphisms. The group of such automorphisms is denoted by $\Auto(N)$.

In \cite{Wo} Wolf  proved that the group of isometries of $(N,\la\,,\,\ra)$ is given by $\Iso(N,\la\,,\,\ra)=N.\Auto(N)$. Thus, 
any isometry can be written as
$$L_p\circ \psi, \quad \mbox{ for } \psi\in \Auto(N), p\in N,$$
where $L_p$ denotes a translation on the left by $p\in N$. 

Indeed the subgroup of translations on the left $\{L_p\}_{p\in N}$ is isomorphic to the Lie group $N$. Moreover it is a normal subgroup since it holds
$$L_p \circ \psi=\psi \circ L_{\psi(p)}\quad \mbox{ for any } \psi\in \Auto(N), p\in N.$$	 
Consider a left-invariant Lorentz force, that means that it is invariant by translations on the left:
$$L_p \circ F= F\circ L_p, \quad \mbox{ for any } p\in N. $$
In terms of the action of Lemma \ref{lem2} one gets,   $L_p\cdot F=F$, saying that the subgroup of left-translations $N$ is contained in the isotropy subgroup $H_F$.

Let $\mathcal F_l$ denote the set of left-invariant Lorentz forces. Thus  the action of the group $\Iso(N)\times \RR^*$ on the set $\mathcal F_l$ is determined by the action of orthogonal automorphisms:
$$(L_p\circ \psi) \cdot F=\psi\cdot  F.$$
In particular from the analysis above one obtains the next result. 
\begin{cor} Let $F$ be a left-invariant Lorentz force on the nilpotent Lie group $(N, \la\,,\, \ra)$ equipped with a left-invariant metric. Then for the isometry $L_p\circ \psi$, the pair $(L_p\circ \psi, \varepsilon)$, with $\varepsilon =\pm 1$,  belongs to the isotropy of $F$  if and only if 
	$$\psi_*\circ F\circ \psi_*^{-1}=\varepsilon F,$$
	with $\psi \in \Auto(N)$.
\end{cor}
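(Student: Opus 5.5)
We combine Lemma \ref{lem2}(ii) with Wolf's description $\Iso(N,\la\,,\,\ra)=N.\Auto(N)$. Lemma \ref{lem2}(ii) gives that $(\phi,r)\in H_F$ with $F\neq 0$ forces $r=\pm1$, and that membership is then equivalent to $\phi_*\circ F\circ\phi_*^{-1}=rF$. (If $F=0$ every pair fixes $F$ and the statement is trivially true, so we assume $F\neq 0$.) Writing the isometry as $\phi=L_p\circ\psi$ with $\psi\in\Auto(N)$ and $p\in N$, and setting $r=\varepsilon$, it therefore suffices to show
$$(L_p\circ\psi)_*\circ F\circ(L_p\circ\psi)_*^{-1}=\psi_*\circ F\circ\psi_*^{-1}.$$

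First expand the left side with the chain rule:
$$(L_{p})_*\circ\psi_*\circ F\circ\psi_*^{-1}\circ(L_{p})_*^{-1}.$$
Then observe that $\psi_*\circ F\circ\psi_*^{-1}$ is again left-invariant. For this we use that $\psi$ is an automorphism, so $\psi\circ L_q=L_{\psi(q)}\circ\psi$ for every $q\in N$. Conjugating a left-invariant tensor by $\psi_*$ therefore gives a tensor that commutes with $(L_{\psi(q)})_*$ for every $q$. Since $\psi$ is bijective, these are all the left translations.

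Left-invariance means precisely $(L_p)_*\circ G\circ (L_p)_*^{-1}=G$ for $G=\psi_*F\psi_*^{-1}$. This is the statement $L_p\cdot G=G$ already noted before the corollary, so the left side collapses to $\psi_*\circ F\circ\psi_*^{-1}$. Hence $(L_p\circ\psi,\varepsilon)\cdot F=\varepsilon\,\psi_*F\psi_*^{-1}$, and this equals $F$ exactly when $\psi_*F\psi_*^{-1}=\varepsilon F$.

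There is no real obstacle. The one point needing care is that the relevant identity holds pointwise on tangent spaces: at a point $x$, $F$ is evaluated at $x$ while $\psi_*^{-1}$ moves us to $\psi^{-1}(x)$. We should therefore check at a general point that the intertwining $\psi\circ L_q=L_{\psi(q)}\circ\psi$ gives left-invariance of the conjugated tensor, rather than only checking at the identity. Once that is done, the equivalence follows directly from Lemma \ref{lem2}(ii).
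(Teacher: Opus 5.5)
Your proof is correct and is essentially the paper's argument. The paper combines Lemma \ref{lem2}(ii) with Wolf's decomposition and the reduction $(L_p\circ\psi)\cdot F=\psi\cdot F$ for left-invariant $F$. You justify that reduction by checking that $\psi_*\circ F\circ\psi_*^{-1}$ is again left-invariant, using the intertwining relation $\psi\circ L_q=L_{\psi(q)}\circ\psi$. The paper's version of this normality identity is misprinted. Your form is the correct one, and it equally lets one commute $L_p$ past $\psi$ and use only the left-invariance of $F$ itself.
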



\begin{example}\label{ortAutHeis} Take the Heisenberg Lie algebra equipped with its canonical metric: the metric for which the basis $e_1,e_2,e_3$ is orthonormal. The center is spanned by $e_3$ and its orthogonal complement $\vv$ is spanned by $e_1, e_2$. Thus one has
	$$\hh_3=\vv \oplus \zz, \quad \mbox{ with }\quad \zz=\RR e_3, \vv=\zz^\perp.$$
	
	A classical result of Lie theory states that a automorphism $\psi$ of the Lie algebra $\hh_3$ in the connected component of the identity, is the exponential of a derivation: $d:\hh_3\to \hh_3$: that is,  $d[u,w]=[du,w]+[u,dw]$. Easy computations show that any skew-symmetric derivation of $\hh_3$ has a matrix in the basis $e_1, e_2, e_3$ of the form
	$$\left( \begin{matrix}
		0 & -a & 0\\
		a & 0 & 0\\
		0 & 0 & 0
	\end{matrix}\right), \qquad \quad a\in \RR.
	$$
	
	Thus, the isometry Lie algebra is $\iso(\hh_3)= \hh_3\oplus \RR$, where any  $a \in \RR$ corresponds to a skew-symmetric derivation as above. Furthermore, the Lie algebra $\iso(\hh_3)$ is isomorphic to the oscillator Lie algebra of dimension four.  The isometry group of the Heisenberg Lie group $(H_3, \la\,,\,\ra)$ is  $\Iso(H_3)= H_3\rtimes\mathrm O(2)$, where the action of $\mathrm O(2)$ on $\hh_3$ is explicitly given by
	$$B\cdot (V,Z) = (B(V),\,det(B)Z), \ \  \mbox{ for }B\in \mathrm O(2), V\in \vv, Z\in \zz.$$
	Note that we identify $\RR^2$ with the subspace $\vv=span\{e_1,e_2\}$. 
	
	And since $H_3$ is simply connected, one makes no distinction between automorphisms of $N$ and $\nn$.
\end{example}

It is easy to prove that  any skew-symmetric map  $F:\hh_3 \to \hh_3$ gives rise to a left-invariant closed 2-form on the Heisenberg Lie group $H_3$. This Lorentz force admits the following matrix representation with respect to the basis $e_1, e_2, e_3$  	
\begin{equation} \label{matrixF}
	F=\left( \begin{matrix}
		0 & -\rho  & - \beta\\
		\rho & 0 & -\alpha \\
		\beta & \alpha & 0
	\end{matrix}
	\right), \quad \alpha, \beta \in \RR. 
\end{equation}

One denotes $F$ by $F_{U,\rho}$ with  $U=\beta e_1+ \alpha e_2\in\hh_3$. 

Now, consider the action of $\Iso(H_3)\times \RR^*$ on the set of left-invariant Lorentz forces $\mathcal F_l$ described in   Lemma \ref{lem2}. This action can be considered equivalently on 2-forms. 

From the definitions, the action of $\Iso(H_3)\times \RR^*$ on the Lorentz force $F$ is determined by $B\in \mathrm O(2)$ and  $r\in \RR^*$ and it holds
$$(B,r)\cdot F_{U, \rho} = r\det(B)F_{BU,\rho}.$$


As above, we identify $\RR^2$ with $\vv=span\{e_1, e_2\}$. Explicitly this identification makes  $e_1 \leftrightarrow (1,0)$ and  $e_2 \leftrightarrow (0,1)$.  

To describe the orbits, take a non-trivial left-invariant Lorentz force $F_{U,\rho}$.

Firstly, note that 
$$(-Id,-1)\cdot F_{U, \rho} = - F_{-U,\rho}=F_{U,-\rho}.$$ 
Thus, one may consider $\rho\geq 0$. 
\begin{itemize}
	\item Assume $U\neq 0$ and $\rho\geq 0$. Thus, there exists a rotation  $B\in \mathrm{SO}(2)$ such that $BU=\Vert U \Vert e_1$. By choosing $r=\Vert U \Vert^{-1}$ one has $(B,r)\cdot F_{U,\rho}=F_{e_1, \widetilde{\rho}}$  for $\widetilde{\rho}=\Vert U\Vert^{-1} \rho$.
	
	
	\item Assume $U=0$. In this case it may hold $\rho\neq 0$ and clearly one can choose $r$ so that $(0,r)\cdot F_{0,\rho}=F_{0,1}$. 
\end{itemize}

Elements of the form $F_{e_1, \rho}$ and $F_{e_1, \widetilde{\rho}}$ are in different orbits for  $\rho\neq \widetilde{\rho}$ and both $\rho, \widetilde{\rho}\geq 0$. In fact, by contradiction assume  that that there exists $(B,r)\in \Iso(H_3)\times \RR^*$ such that $(B,r)\cdot F_{e_1, \rho} = F_{e_1, \widetilde{\rho}}$. 
 This implies  $r \det(B)Be_1=e_1$ and $r\rho=\widetilde{\rho}$. 

Therefore, for $\rho=0$, one has $\widetilde{\rho}=0$.
For $\rho\neq 0$,  one has   $(\widetilde{\rho}/\rho) \det(B)Be_1=e_1$. Clearly $|\widetilde{\rho}/\rho| =1$, which says $\widetilde{\rho}=\rho$ since both are positive. 

For $U=0$ the action $(B,r)\cdot F_{0,1}= F_{0, r}$ showing that this orbit is different from the previous families. 

To compute the isotropy subgroups, we solve the following equations
$$(B,1)\cdot F= F\qquad \mbox{ or } \qquad (B,-1)\cdot F= -F.$$

Recall that $\mathrm{O}(2)$ has two connected components: $\mathrm{SO}(2)$ which consists of matrices with determinant equals one and $\mathrm{SO}^-(2)$ consisting of matrices with determinant minus one. 

\begin{prop} \label{orbits} The orbits under the action of $G=\Iso(H_3)\times \RR^*$ on the set of non-trivial left-invariant Lorentz forces $\mathcal F_l$ are mostly parametrized by $\rho\geq 0$; that is, every point in the following set gives different orbits:
	$$\{F_{e_1, \rho}\}_{\{ \rho \geq 0\}}\cup \{F_{0,1}\}.$$
	While isotropy subgroups are
	\begin{enumerate}[(i)]
		\item For $F_{e_1, \rho}$ with $\rho> 0$: $H_3\rtimes \left( \{(Id,1)\} \cup \{(S,-1)\}\right)$;
		\item For $F_{e_1,0}$: $H_3\rtimes \left( \{(Id,1)\} \cup \{(-S,1)\}\cup \{(-Id,-1)\} \cup \{(S,-1)\}\right)$;
		\item For $F_{0,1}$ one has $H_3\rtimes \left( \mathrm{SO}(2)\times \{1\} \cup \mathrm{SO}^-(2)\times \{-1\}\right),$
	\end{enumerate}
	where $S:\vv\to \vv$ is the linear map determined by the values $S(e_1)=-e_1, S(e_2)=e_2$.
\end{prop}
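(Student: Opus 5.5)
The orbit part of the statement is already established in the discussion preceding Proposition \ref{orbits}. Every non-trivial $F_{U,\rho}$ can be moved to some $F_{e_1,\widetilde\rho}$ with $\widetilde\rho\geq 0$, or to $F_{0,1}$. The forces $F_{e_1,\rho}$ and $F_{e_1,\widetilde\rho}$ with $\rho\neq\widetilde\rho$ lie in different orbits, and $F_{0,1}$ is alone in its own orbit. So I would only record this and concentrate on the isotropy subgroups.

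For the isotropy I would combine three facts.
\begin{enumerate}[(a)]
\item Lemma \ref{lem2}(ii) and the Corollary after it: an element $(L_p\circ\psi,r)$ lies in $H_F$ only if $r=\pm1$, and the condition depends only on the orthogonal automorphism $\psi$.
\item Left translations act trivially on left-invariant forces, so $H_F$ always contains $H_3$ and has the form $H_3\rtimes K_F$ with $K_F\subset \mathrm O(2)\times\{\pm1\}$.
\item The action formula $(B,r)\cdot F_{U,\rho}=r\det(B)\,F_{BU,\rho}$ from Example \ref{ortAutHeis}. Here the scalar $r\det(B)$ multiplies both the $\vv$-block $\rho$ and the vector $BU$.
\end{enumerate}
The whole computation therefore reduces to solving, in $\mathrm O(2)\times\{\pm1\}$, the two equations $r\det(B)\,\rho=\rho$ and $r\det(B)\,BU=U$.

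I would then split into the three cases of the statement.

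\emph{Case (i): $F_{e_1,\rho}$ with $\rho>0$.} The first equation forces $r\det B=1$. The second then reduces to $Be_1=e_1$. The elements of $\mathrm O(2)$ fixing $e_1$ are the identity (with $r=1$) and the reflection across $\RR e_1$ (with $\det=-1$, hence $r=-1$). This gives two elements.

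\emph{Case (ii): $F_{e_1,0}$.} Only $r\det(B)Be_1=e_1$ remains, so $Be_1=\pm e_1$ with the sign equal to $r\det B$. This allows $B\in\{\pm Id,\pm S\}$, each with the uniquely determined $r$. Thus $K_F$ has four elements.

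\emph{Case (iii): $F_{0,1}$.} Only $r\det B=1$ remains. Hence every rotation pairs with $r=1$ and every reflection pairs with $r=-1$, which is exactly $\mathrm{SO}(2)\times\{1\}\cup\mathrm{SO}^-(2)\times\{-1\}$.

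In each case I would finish by checking that the listed $K_F$ is closed under composition. It is, since $(B,r)\mapsto r\det B$ is a homomorphism.

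The main obstacle I expect is bookkeeping of conventions rather than any conceptual step. One has to fix exactly how $\psi_*$ acts on $U=\beta e_1+\alpha e_2$, that is, whether $F_{U,\rho}$ sends $e_3$ to $-U$ and how $B$ transforms the $\vv$-block via $BJB^{-1}=\det(B)J$. The reason is that a sign slip interchanges the reflection fixing $e_1$ with $S$, or $S$ with $-S$, in cases (i) and (ii). I would therefore recompute $\psi_*F\psi_*^{-1}$ directly on $e_1,e_2,e_3$ for $\psi=S$ and $\psi=-S$ (each extended by $e_3\mapsto\det(B)e_3$) and verify the matrix, rather than relying only on the compressed formula, and match the resulting reflections to the ones named in the statement.
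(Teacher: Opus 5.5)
Your reduction is the same as the paper's proof, which consists only of ``explicit computations'' of $(B,r)\cdot F=F$ using $(B,r)\cdot F_{U,\rho}=r\det(B)F_{BU,\rho}$. Your solution of $r\det(B)\rho=\rho$ and $r\det(B)BU=U$ is correct.

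The gap is the step you defer: matching your reflections ``to the ones named in the statement''. Done honestly, this matching fails in (i) and (ii). The statement defines $S(e_1)=-e_1$, $S(e_2)=e_2$, so the reflection across $\RR e_1$ that you find in case (i) is $-S$, not $S$. Concretely, $(S,-1)\cdot F_{e_1,\rho}=(-1)(-1)F_{-e_1,\rho}=F_{-e_1,\rho}\neq F_{e_1,\rho}$ for every $\rho\ge 0$, and $(-S,1)\cdot F_{e_1,\rho}=-F_{e_1,\rho}$.

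A check that bypasses the compressed formula: any element of the isotropy must preserve the line $\ker F_{e_1,\rho}=\RR(e_2-\rho e_3)$. But $S$, extended by $e_3\mapsto -e_3$, sends $e_2-\rho e_3$ to $e_2+\rho e_3$, which is off that line when $\rho>0$.

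Writing out the ``uniquely determined $r$'' in your case (ii) gives $(S,1)$ and $(-S,-1)$. So what your computation actually proves is
\[
H_{F_{e_1,\rho}}=H_3\rtimes\{(Id,1),(-S,-1)\}\ \ (\rho>0),\qquad H_{F_{e_1,0}}=H_3\rtimes\{(Id,1),(S,1),(-Id,-1),(-S,-1)\}.
\]
Item (iii) and the orbit classification hold as stated.

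Read literally with the given $S$, items (i) and (ii) are therefore false. No choice of bookkeeping rescues them, because the conventions are fixed by the matrix of $F_{U,\rho}$ and by $B\cdot(V,Z)=(BV,\det(B)Z)$. The items become correct exactly when $S$ denotes the reflection $e_1\mapsto e_1$, $e_2\mapsto -e_2$. This is also how the paper later uses $(S,-1)$ in the remark on $x_0<0$, where $((S,-1)\cdot\gamma)(t)=\exp(x(-t)e_1-y(-t)e_2-z(-t)e_3)$.

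Your write-up should state this sign correction explicitly rather than promise a match. Also, the closure check at the end is unnecessary: an isotropy subgroup of a group action is automatically a subgroup.
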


To compute the isotropy subgroups, we may find $B:\hh_3\to \hh_3$ such that $(B,1)\cdot F_{e_1,\rho}=F_{e_1,\rho}$ or $(B,-1)\cdot F_{e_1,\rho}=-F_{e_1,\rho}$. 
Analogously for $F_{0,1}$. Explicit computations of these relations, prove the statements above finishing the proof.  Note that we consider the trivial extension of the map $S:\vv\to\vv$ to $\hh_3$. 

\medskip

Indeed, one  could choose  another fixed element $U\in \vv$ instead of $e_1$ to parametrize the orbits.

\begin{rem}	Among the classes enumerated in Proposition \ref{orbits} the classes (ii) and (iii) are distinguished. In fact, the class in (ii) corresponds to harmonic left-invariant forms (see definitions for instance in \cite{Wa}), while the class (iii) corresponds to exact  left-invariant forms.
\end{rem}

\section{The magnetic equation on the Heisenberg Lie group}

In this section we write down the equations of magnetic trajectories passing through the identity element on $H_3$.


Let  $\hh_3$ be the Heisenberg Lie algebra of dimension three equipped with  the canonical  metric of Example \ref{ortAutHeis}, which is extended to the Heisenberg Lie group as a left-invariant metric on $H_3$. 

 As mentioned in Example \ref{ortAutHeis} for the Heisenberg Lie algebra $\hh_3$, any  2-step nilpotent Lie algebra ($\nn,\la\,,\,\ra$) has a orthogonal decomposition as vector spaces
\begin{equation}\label{decomp}\nn=\vv \oplus \zz, \quad \mbox{ with }\vv=\zz^{\perp}\mbox{ for } \zz \mbox{ the center of }  \nn.\end{equation}
The Lie bracket on $\nn$ determines the skew-symmetric linear maps $j(Z):\vv\to \vv$ implicitly defined by 
$$\la j(Z)W, V\ra =\la Z, [W,V]\ra\quad \mbox{ for all } W,V\in \vv, Z\in \zz.$$

Let $F$ denote a left-invariant Lorentz force on a 2-step nilpotent Lie group  $(N,\la\,,\,\ra)$ equipped with a left invariant metric. Thus $F$ can be identified with  a  skew-symmetric map $F:\nn\to \nn$ satisfying 
$$\la [X,V], FW\ra + \la [V,W],FX\ra +\la [W,X],FV\ra=0\quad \mbox{ for all } X,V,W\in \nn,$$
which derives from the closedness condition. Let $\pi_{\zz}:\nn\to \zz$ and $\pi_{\vv}:\nn\to \vv$ denote the orthogonal projections with respect to the decomposition above in Equation \eqref{decomp}. Write $F_{\vv}=\pi_{\vv}\circ F$ and $F_{\zz}=\pi_{\zz}\circ F$. In \cite{OS2} the authors considered a decomposition of $F$ as $F=F_1+F_2$ where
$$F_1(V+Z)=F_{\vv}(V)+F_{\zz}(Z) \qquad F_2(V+Z)=F_{\vv}(Z)+F_{\zz}(V) \quad \mbox{ for } V\in \vv, Z\in \zz,$$
and they named a Lorentz force of type I if $F\equiv F_1$ and of type II if $F\equiv F_2$. 

In \cite{OS} the authors derived the equations for the magnetic trajectories on 2-step nilpotent Lie groups equipped with left-invariant metrics. They are ordinary differential equations as we see below.

\begin{lem} \cite{OS} Let $\gamma:I \to N$ be a curve on $(N,\la\,,\,\ra)$ given as $\gamma(t)=\exp(V(t)+Z(t))$, where $\exp:\nn \to N$ denote
	the usual exponential map and $V(t)\in \vv, Z(t)\in \zz$, see Equation \eqref{decomp}. Then $\gamma$ is a magnetic trajectory with $\gamma(0)=e$ for the left-invariant Lorentz force $F$ if and only if  $V(t)$ and $Z(t)$ satisfy the following equations:
	\begin{equation}\label{magnetic-2step}
		\left\{ \begin{array}{rcl}
			V''(t)-j(Z'(t)+\frac12 [V'(t), V(t)])V'(t) & = & q F_{\vv}(V'(t)+ Z'(t)+ \frac12 
			[V'(t),V(t)])\\
			Z''(t)+\frac12 [V''(t),V(t)] & = & qF_{\zz}(V'(t)+ Z'(t)+ \frac12 
			[V'(t),V(t)]).
		\end{array} \right.
	\end{equation}
\end{lem}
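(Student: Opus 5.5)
The plan is to translate the magnetic equation $\nabla_{\gamma'}\gamma' = qF\gamma'$ into the Lie algebra by left-translating the velocity back to the identity. Write $\gamma(t)=\exp(X(t))$ with $X(t)=V(t)+Z(t)$. Since $N$ is 2-step nilpotent, the Baker--Campbell--Hausdorff formula terminates after the first bracket: $\exp(A)\exp(B)=\exp(A+B+\frac12[A,B])$. From this I will compute the left-trivialized velocity $W(t):=(L_{\gamma(t)^{-1}})_*\gamma'(t)$. Differentiating $\exp(X(t+s))=\exp(X(t))\exp(\epsilon(s))$ in $s$ at $s=0$ gives
\[
W(t)=X'(t)-\tfrac12[X(t),X'(t)]=V'(t)+Z'(t)+\tfrac12[V'(t),V(t)].
\]
Here the central part $Z$ drops out of the bracket, and $W$ has $\vv$-component $V'$ and $\zz$-component $Z'+\frac12[V',V]$.

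Next I will write the covariant derivative along $\gamma$ in terms of $W$. For a left-invariant metric, expanding $\gamma'=\sum_i w_i(t)E_i$ in a left-invariant orthonormal frame yields
\[
\nabla_{\gamma'}\gamma'=(L_{\gamma})_*\bigl(W'(t)+\nabla_{W}W\bigr),
\]
where $\nabla_WW$ is computed on left-invariant fields by the Koszul formula. For 2-step nilpotent metric Lie algebras the standard formulas (Eberlein) are $\nabla_VV'=\frac12[V,V']$, $\nabla_VZ=\nabla_ZV=-\frac12 j(Z)V$ and $\nabla_ZZ'=0$. Hence for $W=W_\vv+W_\zz$ one gets $\nabla_WW=-j(W_\zz)W_\vv$, because the bracket term vanishes by skew-symmetry.

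Since $F$ is left-invariant, $F\gamma'=(L_\gamma)_*FW$, so the magnetic equation becomes $W'-j(W_\zz)W_\vv=qFW$. I will project this onto $\vv$ and $\zz$:
\begin{itemize}
\item The $\vv$-part is $V''-j(Z'+\frac12[V',V])V'=qF_\vv(W)$, which is the first equation.
\item The $\zz$-part is $\frac{d}{dt}\bigl(Z'+\frac12[V',V]\bigr)=Z''+\frac12[V'',V]+\frac12[V',V']$. The last bracket vanishes, giving the second equation.
\end{itemize}
The condition $\gamma(0)=e$ corresponds to $X(0)=0$. Since $\exp$ is a diffeomorphism for simply connected nilpotent $N$, every curve through $e$ has this form, and each step above is an equivalence, which gives the "if and only if".

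The main obstacle is computing $W$ correctly, in particular the sign and factor of the bracket term. I will check it against the Heisenberg product \eqref{operation} in exponential coordinates. The connection formulas also require care, since their signs depend on the convention $\la j(Z)W,V\ra=\la Z,[W,V]\ra$. I will rederive them directly from the Koszul formula $2\la\nabla_XY,U\ra=\la[X,Y],U\ra-\la[Y,U],X\ra+\la[U,X],Y\ra$ rather than quote them.
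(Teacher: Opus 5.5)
Your proposal is correct: the left-trivialized velocity $W=V'+Z'+\frac12[V',V]$, the identity $\nabla_{\gamma'}\gamma'=(L_\gamma)_*(W'+\nabla_WW)$, and Eberlein's connection formulas giving $\nabla_WW=-j(W_\zz)W_\vv$ all check out, including the signs under the convention $\la j(Z)W,V\ra=\la Z,[W,V]\ra$, and projecting onto $\vv$ and $\zz$ yields exactly the stated system. The paper gives no proof of this lemma and simply quotes it from \cite{OS}, where it is obtained by essentially this same route, so there is nothing to add.
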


In the next paragraphs we shall write these equations explicitly for the Heisenberg Lie group of dimension three. In fact,  $H_3$ can be modeled on $\RR^3$ with the product operation given by
$$(v_1, z_1)(v_2, z_2)=(v_1+v_2, z_1+z_2+\frac12 \la Jv_1,v_2\ra), $$
where $v_i=(x_i,y_i)$ for $i=1,2$ and $J(x,y)=(-y,x)$. And since $H_3$ is simply connected the exponential map is a diffeomorphism. Moreover the exponential map will send $\exp: x e_1 + y e_2 + z e_3 \to (x,y,z)$ in usual coordinates of $\RR^3$.

\begin{example} \label{exacsolutions} 
	Consider the left-invariant Lorentz force $F_{0,\rho }$ on  the Heisenberg Lie group $(H_3,\la\,,\,\ra)$, which corresponds to a left-invariant exact magnetic field for any $\rho>0$. 
	
	Let $\gamma(t)=\exp(x(t)e_1+ y(t) e_2 + z(t)e_3)$ denote a magnetic trajectory passing through the identity element. Assume $\gamma'(0)=x_0e_1+y_0 e_2+z_0 e_3$. Then the system  \eqref{magnetic-2step} above translates to the system:	
	\begin{equation}\label{magnetic-exact}
		\left\{ \begin{array}{rcl}
			x''(t)+(z_0+\rho) y'(t) & = &  0\\
			y''(t)-(z_0+\rho) x'(t)  & = & 0 \\
			z'(t)+\frac{1}{2}(x'(t)y(t)-x(t)y'(t)) & = & z_0
		\end{array} \right.
	\end{equation}
	
	In \cite{EGM} the authors obtain the explicit solutions as:
	\begin{itemize}
		\item if $z_0+\rho\neq 0$, the solution is
		$$\left( \begin{matrix}
			x(t)\\
			y(t)
		\end{matrix}\right) = \frac{1}{z_0+\rho}\left( \begin{matrix}
			\sin(t(z_0+\rho)) & -1+\cos(t(z_0+\rho))\\
			1-\cos(t(z_0+\rho)) & \sin(t(z_0+\rho))
		\end{matrix}
		\right)\left( \begin{matrix} x_0\\ y_0 	\end{matrix}
		\right),$$
		and for $V_0=x_0e_1+y_0e_2$ set
		$$z(t)=\left(z_0+\frac{||V_0||^2}{2(z_0+\rho)}\right)t - \frac{||V_0||^2}{2(z_0+\rho)^2}\sin(t(z_0+\rho)).$$
		\item If $z_0=-\rho$ the solution is 
		$\gamma(t)=\exp(t(x_0e_1+y_0 e_2+ z_0 e_3))$. 
		
	\end{itemize}
\end{example}


\medskip

The aim now is to write the magnetic equations on the Heisenberg Lie group $H_3$ for a left-invariant Lorentz force $F_{U,\rho}$ with $U\neq 0$. As above we write $U=\beta e_1+\alpha e_2\in\vv$. 

Let $\gamma(t)= \exp(V(t)+Z(t))$  denote a  magnetic trajectory on the Heisenberg group passing through the identity element. Writing $V(t)=x(t)e_1+y(t)e_2$ and $Z(t)=z(t)e_3$, the magnetic equations in \eqref{magnetic-2step} for any  Lorentz force follows 
\begin{equation}\label{magnetic-Heis}
	\left\{ \begin{array}{rcl}
		x''(t)+\left(z'(t)+\frac12(y'(t) x(t) -x'(t) y(t)+\rho\right) (y'(t)+\beta) & = &  \rho \beta \\
		y''(t)-\left(z'(t)+\frac12(y'(t) x(t) -x'(t) y(t)+\rho\right) (x'(t)- \alpha)  & = & \rho \alpha \\
		z''(t)+\frac{1}{2}(x''(t)y(t)-x(t)y''(t)) & = & \beta x'(t)+\alpha y'(t)\\
	\end{array} \right.
\end{equation}
with the initial conditions $V(0)+Z(0)=0$, $V'(0)=V_0=x_0 e_1+ y_0 e_2$ and $Z'(0)= z_0e_3$. 





As proved in Proposition \ref{orbits}, for a magnetic trajectory $\gamma$  there is a  symmetry  $\psi=(g,\phi)\in N \Auto(N)$ with $(\psi,r)$ on the isotropy subgroup   such that $\sigma = (\psi,r)\cdot\gamma$ is also a magnetic trajectory satisfying $\sigma(0)=e$ and  $\sigma'(0)=x_0e_1+y_0e_2+z_0e_3$ with {\bf  $x_0\geq 0$}. So we can restrict ourselves to this case. 



By making use of the action of  $\Iso(H_3)\times \RR^*$  on the set of left-invariant Lorentz forces, as in Proposition \ref{orbits}, we may choose $U=e_1$ and  system \eqref{magnetic-Heis} reduces  to:
\begin{equation}\label{magnetic-type2HeisSimp2}
	\left\{ \begin{array}{rcl}
		x''(t)+( x(t)+z_0+\rho) (y'(t)+1) & = &  \rho\\
		y''(t)-( x(t)+z_0+\rho) x'(t)  & = & 0 \\
		z'(t)+\frac{1}{2}(x'(t)y(t)-x(t)(y'(t)+2)) & = & z_0
	\end{array} \right.
\end{equation}
with initial conditions $x(0)=y(0)=z(0)=0$ and $x'(0)=x_0, y'(0)=y_0, z'(0)=z_0$.

The first observation is that to solve Equations \eqref{magnetic-type2HeisSimp2} one needs to solve only an ordinary differential equation in one variable.

\begin{lem}\label{1solution}
	Let $F_{e_1, \rho}$ be a left-invariant Lorentz force on $(H_3,\la\, ,\,\ra)$. There is a one-to-one correspondence between:
	$$\left\{\begin{array}{c}
		\gamma(t)=\exp(x(t)e_1+ y(t) e_2 + z(t)e_3):\\ \mbox{solutions of Equation \eqref{magnetic-type2HeisSimp2}}\\
		\gamma(0)=e \mbox{ and } \gamma'(0)=x_0e_1+y_0e_2+z_0e_3
	\end{array}\right\}  \longleftrightarrow     \left\{\begin{array}{c}
		x(t): \mbox{solution of Equation }\\
		x''(t)+h'(x(t)) h(x(t))=\rho \\
		x(0)=0 \mbox{ and } x'(0)=x_0
	\end{array}\right\} $$ 
	where	$h(x)= \frac{x^2}{2}+(z_0+\rho)x+y_0+1$.

	Moreover, the functions $y(t)$ and $z(t)$ are determined by $x(t)$ as follows
	\begin{equation}\label{eqmagneticHeisy}y(t)=\int_{0}^{t}\left(\frac{x(s)^2}{2}+(z_0+\rho)x(s)+y_0\right)ds,\end{equation} 
	\begin{equation}\label{eqmagneticHeisz}z(t)=-\frac{1}{2}x(t)y(t)-(z_0+\rho)y(t)-x'(t)+x_0.\end{equation}		
\end{lem}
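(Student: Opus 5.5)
The plan is to integrate the second and third equations of \eqref{magnetic-type2HeisSimp2} once each, and then substitute into the first equation to obtain the single ODE for $x$.

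\textbf{Step 1: the $y$ equation.} The second equation reads $y''=(x+z_0+\rho)x'$. The right-hand side is the derivative of $\frac{x^2}{2}+(z_0+\rho)x$. Integrating with $x(0)=0$ and $y'(0)=y_0$ gives
$$y'(t)=\frac{x(t)^2}{2}+(z_0+\rho)x(t)+y_0=h(x(t))-1.$$
A further integration with $y(0)=0$ yields \eqref{eqmagneticHeisy}. Hence $y'+1=h(x)$, while $h'(x)=x+z_0+\rho$. Substituting into the first equation gives exactly $x''+h'(x)h(x)=\rho$ with $x(0)=0$, $x'(0)=x_0$.

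\textbf{Step 2: the $z$ equation.} From the first equation, $(x+z_0+\rho)(y'+1)=\rho-x''$. The third equation gives
$$z'=z_0-\tfrac12 x'y+\tfrac12 xy'+x.$$
To identify $z$, I would differentiate the candidate \eqref{eqmagneticHeisz}:
$$-\tfrac12x'y-\tfrac12xy'-(z_0+\rho)y'-x''.$$
Next, replace $x''$ using the first equation, i.e.\ $-x''=(x+z_0+\rho)(y'+1)-\rho$. After expansion this becomes
$$-\tfrac12x'y+\tfrac12 xy'+x+z_0,$$
which matches the required $z'$. The candidate also satisfies $z(0)=-x'(0)+x_0=0$. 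By uniqueness for $z'=\dots$ with $z(0)=0$, formula \eqref{eqmagneticHeisz} holds. This algebraic check is the only mildly delicate point, and it amounts to careful bookkeeping of terms.

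\textbf{Step 3: the correspondence.} Given a solution $(x,y,z)$ of \eqref{magnetic-type2HeisSimp2}, Steps 1 and 2 show that $x$ solves the scalar problem. Conversely, given $x$ solving $x''+h'(x)h(x)=\rho$ with the stated initial data, define $y$ and $z$ by \eqref{eqmagneticHeisy} and \eqref{eqmagneticHeisz}. Reversing the computations above shows that the three equations and all initial conditions hold.

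The maps $\gamma\mapsto x$ and $x\mapsto(x,y,z)$ are mutually inverse: $y$ and $z$ are uniquely determined by $x$, and $x$ is the first coordinate. Moreover, $\exp$ is a diffeomorphism, so $\gamma$ corresponds bijectively to the triple $(x,y,z)$. This gives the one-to-one correspondence.
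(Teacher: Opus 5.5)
Your proposal is correct and takes essentially the paper's route: integrate the second equation to get $y'=h(x)-1$, substitute into the first equation to obtain $x''+h'(x)h(x)=\rho$, and use the first equation to eliminate $x''$ from the third in order to recover $z$. The only difference is that the paper derives $z'=-\tfrac12x'y-\tfrac12xy'-(z_0+\rho)y'-x''$ and integrates it, while you differentiate the candidate formula and check it; this is the same computation read in reverse.
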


\begin{proof} From the first and third equations in \eqref{magnetic-type2HeisSimp2} we have that:
	\begin{align*}
		z'(t)
		=-\frac{1}{2}x'(t)y(t)-\frac{1}{2}x(t)y'(t) -(z_0+\rho)y'(t)-x''(t), 
	\end{align*}
	which by integration implies \eqref{eqmagneticHeisz}.  
	
	Analogously, from the second equation of the system \eqref{magnetic-type2HeisSimp2} and the fact that $y'(0)=y_0$ one has
	\begin{equation*}
		y''(t)=\left(\frac{x(t)^2}{2}+(z_0+\rho) x(t)\right)'\Rightarrow y'(t)=\frac{x(t)^2}{2}+(z_0+\rho)x(t)+y_0.
	\end{equation*}
	Implying the formula \eqref{eqmagneticHeisy} since $y(0)=0$.
	
	Using the expression of $y'(t)$ above in the first equation of the system \eqref{magnetic-type2HeisSimp2} we get
	\begin{equation}\label{eqmagneticHeisx}
		x''(t)+( x(t)+z_0+\rho) \left(\frac{ x(t)^2}{2}+(z_0+\rho) x(t)+y_0+1\right) =  \rho.
	\end{equation}
	
	Conversely, once one has a solution $x(t)$ of Equation \eqref{eqmagneticHeisx} define $y(t)$ and $z(t)$ by Formulas \eqref{eqmagneticHeisy} and \eqref{eqmagneticHeisz}, respectively, to get a magnetic curve $\gamma(t)=\exp(x(t)e_1+ y(t) e_2 + z(t)e_3)$ solution of the system \eqref{magnetic-type2HeisSimp2} with $\gamma(0)=e$ and $\gamma'(0)=x_0e_1+y_0e_2+z_0e_3$. 
\end{proof}

\begin{rem} There is a analogous result to the Lemma above for a left-invariant Lorentz force $F_{0,\rho}$. 	
	The corresponding equations were  written in Equation  \eqref{magnetic-exact}.  
		
For the proof of this assertion note that from the second equation in system \eqref{magnetic-exact} one has $y'(t)=y_0+(z_0+\rho)x(t)$  and  by replacing in the first equation, one gets  $$x''(t)+(z_0+\rho)[y_0+(z_0+\rho)x(t)]=0.$$ 
	Conversely given a solution $x(t)$ of the previous equation define $y(t)$ and $z(t)$ by
	$$y(t)=\int_{0}^{t} \left((z_0+\rho)x(s)+y_0 \right) ds$$
	$$z(t)=-\frac{1}{2}x(t)y(t)+\int_{0}^{t}\left((z_0+\rho)x(s)^2+y_0 x(s)+z_0\right)ds$$
	to get a magnetic curve $\gamma(t)=exp(x(t)e_1+y(t)e_2+z(t)e_3)$ solution of the system \eqref{magnetic-exact}.
	
	
\end{rem}

By the Cauchy-Kovalevskaya theorem Equation \eqref{eqmagneticHeisx} has a unique real analytic solution with $x(0)=0$ and $x'(0)=x_0$. Thus, the solution is $x(t)\equiv 0$ if and only if $x_0=0$ and $(z_0+\rho)(y_0+1)=\rho$.

From now on we assume that
\begin{equation}\label{nonconst}
	x_0\neq 0 \mbox{ or }  (z_0+\rho)(y_0+1)\neq\rho,
\end{equation} so that $x(t)$ is not constant in any interval.  Then we work out Equation \eqref{eqmagneticHeisx} in the following way
\begin{align*}
	x''(t)+h'(x(t))h(x(t))=\rho \\
	\mbox{ equivalently to }\quad   x'(t)x''(t)+x'(t)h'(x(t))h(x(t))=\rho x'(t)\\
	\Leftrightarrow \qquad   \left(x'(t)^2 +h(x(t))^2\right)'=2\rho x'(t)\\
	\mbox{ and this says that } \quad  x'(t)^2 +h(x(t))^2 = 2\rho x(t) + x_0^2+(y_0+1)^2.
\end{align*}
The function $h$ was defined in Lemma \ref{1solution}. 
So, we get that $x'$ may solve the equation:
\begin{equation}\label{Eqonev}
	x'(t)^2 =\Vert V_0+e_2\Vert ^2-h(x(t))^2 + 2\rho x(t).
\end{equation}

\medskip



In the following paragraphs we shall see that the polynomial $\Vert V_0+e_2\Vert^2-h(x)^2 + 2\rho x$ plays a role for the description of the solutions of Equation \eqref{magnetic-type2HeisSimp2}. 

In fact, to solve Equation \eqref{Eqonev}, we assume that $x_0>0$, so  $x'(t)>0$ in some neighborhood of $0$. Thus,  one has:
$$\frac{x'(t)}{\sqrt{\Vert V_0+e_2\Vert^2-h(x(t))^2 + 2\rho x(t)}}  =1$$ 
implying that
$$\int_{0}^{x(t)}\frac{d\tau}{\sqrt{\Vert V_0+e_2 \Vert^2-h(\tau)^2 + 2\rho \tau}}  =t.$$
Hence the solution $x(t)$ is the inverse of the elliptic integral
\begin{equation}\label{ellipticint}
	\mathcal{E}(x)=
	\int_{z_0+\rho}^{x+z_0+\rho}\frac{d\eta}{\sqrt{\Vert V_0+e_2\Vert^2-\left(\frac{\eta^2}{2}+y_0+1-\frac{(z_0+\rho)^2}{2}\right)^2+2\rho (\eta-z_0-\rho)}}.
\end{equation}
where $\eta=\tau+z_0+\rho$.

We have to analyze the quartic polynomial
\begin{equation}\label{polinQ}
	\begin{split}
		P(\eta)&= \Vert V_0+e_2\Vert^2-\left(\frac{\eta^2}{2}+y_0+1-\frac{(z_0+\rho)^2}{2}\right)^2+2\rho (\eta-z_0-\rho)\\
		&=-\frac{1}{4}\left(\eta^4 + 2p_0 \eta^2 -8\rho \eta + q_0  \right) 
	\end{split}
\end{equation}  
where
\begin{equation}
	p_0=2(y_0+1) - (z_0+\rho)^2  \ \mbox{ and } \  q_0=p_0^2+8\rho(z_0+\rho) - 4\Vert V_0+e_2\Vert^2.
\end{equation}


Firstly, observe that

\begin{itemize}
	\item $P$ has negative principal coefficient ($c_4=-\frac{1}{4}$),
	\item $P(z_0+\rho)=x_0^2\geq 0$ and
	\item  $P'(z_0+\rho)=2(\rho-(y_0+1)(z_0+\rho))$.
\end{itemize} 
By the assumption \eqref{nonconst}, the values $P(z_0+\rho)$ and $P'(z_0+\rho)$ are not simultaneously null. We conclude that $P$ has at least two distinct real roots and $P$ is non-negative in a closed interval of positive length containing $z_0+\rho$. Thus $\mathcal{E}(x)$ is a real function on some interval containing $0$.  

To determine the nature of the roots we have to consider the discriminant of $P$:
\begin{equation}\label{delta}
	\Delta = q_0 p_0^4-8\rho^2 p_0^3-432\rho^4+72 \rho^2 q_0 p_0-2 q_0^2 p_0^2+q_0^3.
\end{equation}
We have three possible cases:
\begin{enumerate}[(a)]
	\item if $\Delta <0$, then  $P$ has two distinct real roots and two complex conjugate non-real roots,
	\item if $\Delta >0$, then $P$ has four distinct real roots (cannot be all non-real),
	\item if $\Delta =0$, then $P$ has a root of multiplicity greater than one and all roots are real.
\end{enumerate}

Suppose that $\Delta\neq0$. Let $r_1,$ $r_4$ be the lower and greatest real roots of $P$, respectively, and $r_2$, $r_3$ the other roots in $\mathbb{C}$, that can be real or not. By Vieta's formulas we have that:
\begin{eqnarray}
	r_2+r_3&=&-(r_1+r_4)\label{Viete1}\\
	r_2 r_3&=& 2p_0 + r_1^2 + r_4^2 + r_1 r_4\label{Viete2}\\
	(r_1+r_4)(2p_0 + r_1^2 + r_4^2) &=& 8\rho. \label{Viete3} 	
\end{eqnarray}

Observe that $2p_0+r_1^2+r_4^2\neq0$. In fact, since $2p_0+r_1^2+r_4^2=0$ one obtains $\rho=0$ and from the expression of $P$ in Equation \eqref{polinQ} it is direct to see that $r_1^2=r_4^2=z_0^2+\Vert V_0+e_2\Vert - 2(y_0+1)=-p_0$. Then $\Vert V_0+e_2\Vert=0$, i.e. $x_0=0$ and $y_0=-1$. But this correspond to the trivial solution and we have supposed that $x_0\neq 0$ or $z_0(y_0+1)\neq 0$.

From \eqref{Viete1} and \eqref{Viete2} we have that:
\begin{equation*}
	r_2=-\frac{r_1+r_4}{2}-\sqrt{\frac{(r_4-r_1)^2}{4}-2p_0-r_1^2-r_4^2}  \qquad r_3=-\frac{r_1+r_4}{2}+\sqrt{\frac{(r_4-r_1)^2}{4}-2p_0-r_1^2-r_4^2}.
\end{equation*}


In particular,
\begin{equation}\label{reldisneg}
	\Delta<0 \iff 2p_0+r_1^2+r_4^2 > \frac{(r_4-r_1)^2}{4}>0.
\end{equation}

If $\Delta=0$  we take $r_1=r_4=r$. The previous relations \eqref{Viete1} and \eqref{Viete2} also hold and we get $r_2=-r-\sqrt{-2(p_0+r^2)}$ and $r_3=-r+\sqrt{-2(p_0+r^2)}$. All the roots must be real in this case so $p_0+r^2<0$.   




\begin{caseof}
	
	\case{$\Delta <0$} In this case $r_1 < r_4\in\RR$ and $r_2 =\overline{r_3}\in\CC-\RR$.
	Using Equation 259.00 of \cite{BF} and inverting $\mathcal{E}$ we get that the solution of Equation \eqref{eqmagneticHeisx} is given by
	\begin{equation}\label{solcase1}
		x(t)=\frac{\left(r_1 \delta_4-r_4 \delta_1\right)\mathrm{cn}\left(\frac{\sqrt{\delta_1 \delta_4}}{2}t + C_1,k\right)+r_1 \delta_4 + r_4 \delta_1 }{\left(\delta_4-\delta_1\right)\mathrm{cn}\left(\frac{\sqrt{\delta_1 \delta_4}}{2}t+C_1,k\right) + \delta_1 + \delta_4} -z_0-\rho,
	\end{equation}
	where $\mathrm{cn}$ is the cosine amplitude, Jacobi's elliptic function, with 
	
	$\delta_1=|r_1-r_2|=\sqrt{2p_0+2r_1^2+(r_1+r_4)^2}$, and 
	$\delta_4=|r_4-r_2|=\sqrt{2p_0+2r_4^2+(r_1+r_4)^2}$,

	$k^2=\frac{(r_4-r_1)^2 - (\delta_4-\delta_1)^2}{4\delta_1 \delta_4}$, while  
	$C_1=\mathrm{cn}^{-1}\left(\frac{(r_4-z_0-\rho)\delta_1 - (z_0+\rho-r_1)\delta_4}{(r_4-z_0-\rho)\delta_1 + (z_0+\rho-r_1)\delta_4},k\right)$. Observe that the constant $C_1$ is well defined since $r_1\leq z_0+\rho\leq r_4$.


	By considering the triangle inequality corresponding to $r_1$, $r_4$ and $r_2$ on the complex plane, more precisely $\delta_1+\delta_4\geq r_4-r_1$, one gets that $0<k<1$. Recall that $\mathrm{cn}(x,k)$ is periodic of period $4K(k)$ for $0<k<1$ and its image is $[-1,1]$, where $K(k)$ is complete elliptic integral of the first kind defined as
	\begin{equation}
		K(k)=\int_{0}^{\pi/2} \frac{1}{\sqrt{1-k^2 sin^2 \theta}}d\theta.
	\end{equation} 	
	Thus, the solution $x(t)$ is periodic of period $\frac{8K(k)}{\sqrt{\delta_1 \delta_4}}$ and its image is $[r_1-z_0-\rho,r_4-z_0-\rho]$.
	
	See more details about properties of elliptic functions in \cite{Di}.


	\case{$\Delta >0$} We have that $r_1 < r_2 < r_3 < r_4\in\RR$. By considering the region where $P$ is positive, we have two possibilities: either  $r_1 \leq z_0+\rho \leq r_2$ or $r_3 \leq z_0+\rho \leq r_4$. 
	
	If $r_1 \leq z_0+\rho \leq r_2$, we use Equation 252.00 of \cite{BF} to get the solution
	\begin{equation}\label{solcase21}	
		x(t)=r_4-\frac{r_4-r_1}{1+\frac{r_2-r_1}{r_4-r_2}\mathrm{sn}^2\left(\frac{\sqrt{(r_4-r_2)(r_3-r_1)}}{4}	t + C_{21},k_1\right)}-z_0-\rho,
	\end{equation}
	where $k_1=\sqrt{\frac{(r_4-r_3)(r_2-r_1)}{(r_4-r_2)(r_3-r_1)}}$ and
	$C_{21}=\mathrm{sn}^{-1}\left(\sqrt{\frac{(r_4-r_2)(z_0+\rho-r_1)}{(r_2-r_1)(r_4 -z_0-\rho)}},k_1\right)$. Observe that $k_1 <1$. 
	
	Since the function $\mathrm{sn}^2(x,k_1)$ is periodic of period $2K(k_1)$ and image $[0,1]$, then the function $x(t)$ is periodic of period $\frac{8K(k_1)}{\sqrt{(r_4-r_2)(r_3-r_1)}}$ and image $[r_1-z_0-\rho, r_2-z_0-\rho]$.
	
	We can rewrite solution \eqref{solcase21} exactly as \eqref{solcase1} using elliptic identities
	where \\$\delta_1=\sqrt{(r_2-r_1)(r_3-r_1)}$,  $\delta_4=\sqrt{(r_4-r_3)(r_4-r_2)}$, $k^2=\frac{(r_4-r_1)^2-(\delta_4-\delta_1)^2}{4\delta_1 \delta_4}$ and $C_1=2\sqrt{k_1}C_{21}$. The difference with the previous case is that now $k>1$. In fact by the AM-GM inequality we have that $\delta_1<\frac{r_2+r_3}{2}-r_1$ and $\delta_4<r_4-\frac{r_2+r_3}{2}$, which gives  $(\delta_1-\delta_4)^2+4\delta_1\delta_4=(\delta_1+\delta_4)^2 <(r_4-r_1)^2$.

	If $r_3 \leq z_0+\rho \leq r_4$ we have that: 
	\begin{equation}\label{solcase22}
		x(t)=r_1+\frac{r_4-r_1}{1+\frac{r_4-r_3}{r_3-r_1}\mathrm{sn}^2\left(\frac{\sqrt{(r_4-r_2)(r_3-r_1)}}{4}	t - C_{31},k_1\right)}-z_0-\rho,
	\end{equation}	
	which can be written also 
	as:
	\begin{equation}\label{solcase221}
		x(t)=\frac{(r_1 \delta_4 - r_4 \delta_1)\,\mathrm{cn}\left(\frac{\sqrt{\delta_1 \delta_4}}{2}t-C_3,k\right)-r_1 \delta_4- r_4\delta_1}{(\delta_4-\delta_1)\,\mathrm{cn}\left(\frac{\sqrt{\delta_1 \delta_4}}{2}t-C_3,k\right)-\delta_1-\delta_4} -z_0-\rho,
	\end{equation}
	where $k_1$ and $k$ are the same as before,
	with 	$C_{31}=\mathrm{sn}^{-1}\left(\sqrt{\frac{(r_3-r_1)(r_4-z_0-\rho)}{(r_4-r_3)( z_0+\rho-r_1)}},k_1\right)$ and $C_3=2\sqrt{k_1}C_{31}.$ 
	This solution is also periodic with the same period $\frac{8K(k_1)}{\sqrt{(r_4-r_2)(r_3-r_1)}}$ but its image is $[r_3-z_0-\rho, r_4-z_0-\rho]$.

	\case{$\Delta =0$} Let $r\in\RR$ be the root of $P$ with multiplicity two and consider $\mu=\frac{p_0 +3r^2}{2}=\frac{1}{4}(r-r_2)(r-r_3)$. See the paragraph after Equation \eqref{reldisneg}. 
	\begin{enumerate}[(a)]
		\item If $\mu=\frac{p_0 +3r^2}{2}>0$ the solution of \eqref{eqmagneticHeisx} is given by 
		\begin{equation}\label{sol3a}
			x(t)=\frac{-2\mu }{r+\sqrt{r^2-\mu}\cos\left(\sqrt{\mu}\,t-C_4\right)}+r-z_0-\rho,
		\end{equation}
		where
		$C_4=\arccos\left(\frac{1}{\sqrt{r^2-\mu}}\left(-\frac{2\mu}{z_0+\rho-r}-r\right)\right)$.

		In this case the solution is also periodic with period $\frac{2\pi}{\sqrt{\mu}}$ and its image is the interval 
		$\left[-r- 2\sqrt{-\frac{p_0+r^2}{2}}-z_0-\rho, -r+ 2\sqrt{-\frac{p_0+r^2}{2}}-z_0-\rho\right]$.

		\item If $\mu=\frac{p_0 +3r^2}{2}<0$. There are two possibilities, either   $r_2=-r-\sqrt{-2(p_0+r^2)}\leq z_0+\rho<r$ or $r<z_0+\rho\leq r_3= -r+\sqrt{-2(p_0+r^2)}$.
		
		If $r<z_0+\rho\leq -r+\sqrt{-2(p_0+r^2)}$, the solution is
		\begin{equation}\label{sol3b1}
			x(t)=\frac{- 2\mu
			}{r+\sqrt{r^2-\mu} \cosh(\sqrt{-\mu}\,t+C_5)}+r-z_0-\rho,
		\end{equation}
		where
		$C_5= \arccosh\left(\frac{1}{\sqrt{r^2-\mu}}\left(-\frac{2\mu}{z_0+\rho-r}-r\right)\right)$.

		If $-r-\sqrt{-2(p_0+r^2)}\leq z_0+\rho<r$, we have the solution 
		\begin{equation}\label{sol3b2}
			x(t)=\frac{- 2 \mu
			}{r-\sqrt{r^2-\mu} \cosh(\sqrt{-\mu}\,t+C_6)}+r-z_0-\rho,
		\end{equation}
		where
		$C_6= -\arccosh\left(\frac{1}{\sqrt{r^2-\mu}}\left(\frac{2\mu}{z_0+\rho-r}+r\right)\right)$.
		
		In both cases the solutions are not periodic but their images are respectively 
		
		$\left( r-z_0-\rho,-r+\sqrt{-2(p_0+r^2)}-z_0-\rho\right]$ and $\left[-r-\sqrt{-2(p_0+r^2)}-z_0-\rho, r-z_0-\rho\right)$.

		\medskip
		
		\item Finally in the case that $\mu=\frac{p_0 +3r^2}{2}=0$, the solution of Equation \eqref{eqmagneticHeisx} is given by
		\begin{equation}\label{sol3c}
			x(t)=\frac{ -4r}{1+r^2(t+C_7)^2}+r-z_0-\rho,
		\end{equation}
		where $C_7=\frac{1}{r}\sqrt{\frac{3r+z_0+\rho}{r-z_0-\rho}}$. This function is not periodic and its image is the interval $(r-z_0-\rho, -3r-z_0-\rho]$ if $\rho>0$ and $[-3r-z_0-\rho, r-z_0-\rho)$ if $\rho<0$.

	\end{enumerate}

\end{caseof}


\begin{rem}
	Observe that the solutions for the case $\Delta=0$ are written in terms of $r$, $\rho$ and $z_0$. In these cases we can express $r$ explicitly in terms of the initial conditions. In fact, in cases  (3)(a) and (3)(b) where $p_0^2+3q_0\neq 0$ the following formulas hold:
	$$r=\frac{2\rho(p_0^2+3q_0)}{p_0^3-p_0q_0+36\rho^2} \quad \text{ and } \quad  \mu=6\frac{9\rho^2-p_0q_0}{p_0^2+3q_0} + \frac{p_0}{2}.$$ 
	In case (3)(c) one has that $p_0^2+3q_0=0$  and $r=-\sqrt[3]{\rho}$.
\end{rem}

\begin{rem}	
	Taking into account that $\mathrm{cn(x,0)}=\cos(x)$ and $\mathrm{cn(x,1)}=\sech(x)$ we see that all solutions (except the last one) can be written as in Equation \eqref{solcase1} where $r_1$ and $r_4$ are real distinct roots of $P$, $r_2$ and $r_3$ the remaining roots (real or complex),  $\delta_1=\sqrt{(r_2-r_1)(r_3-r_1)}$, $\delta_4=\sqrt{(r_4-r_2)(r_4-r_3)}$ and $k^2=\frac{(r_4-r_1)^2 - (\delta_4-\delta_1)^2}{4\delta_1\delta_4}$.
\end{rem}





\begin{thm}\label{thmsolutionmagHeisII}
	The solution of Equation \eqref{eqmagneticHeisx} with initial conditions $x(0)=0$ and $x'(0)=x_0$ is summarized below:
	\begin{enumerate}[(i)]
		\item If $x_0= 0$ and $(y_0+1)(z_0+\rho)= \rho$, then $x(t)=0$ for every $t\in\RR$.
		
		\item If $x_0^2 + (\rho-(y_0+1)(z_0+\rho))^2\neq 0$ and $x_0\geq0$, the equation's number for  $x(t)$ is given in the second column of Table \ref{table:1},		where one has:
		\begin{itemize}
			\item $p_0=2(y_0+1)-(z_0+\rho)^2$,
			\item $q_0=p_0^2+8\rho(z_0+\rho)-4(x_0^2+(y_0+1)^2)$,
			\item $\Delta=q_0 p_0^4-8\rho^2 p_0^3-432\rho^4+72 \rho^2 q_0 p_0-2 q_0^2 p_0^2+q_0^3$,
			\item $\mu=\frac{9\rho^2-p_0q_0}{p_0^2+3q_0} + \frac{p_0}{12}$ and $r=\frac{2\rho(p_0^2+3q_0)}{p_0^3-p_0q_0+36\rho^2}$.
			\item If $\Delta<0$, the real numbers $r_1<r_4$ and the non-real complex numbers $r_2$ and $\overline{r_2}$ are the roots of the polynomial $P$ in Equation \eqref{polinQ} and $k=\sqrt{\frac{(r_4-r_1)^2 - (|r_4-r_2|-|r_1-r_2|)^2}{4|r_4-r_2||r_1-r_2|}}$.
			\item If $\Delta>0$, $r_1<r_2<r_3<r_4$ are the real roots of $P$ and $k_1=\sqrt{\frac{(r_4-r_3)(r_2-r_1)}{(r_4-r_2)(r_3-r_1)}}$.
		\end{itemize} 
		
	\end{enumerate}
	
\end{thm}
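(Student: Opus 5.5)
The plan is to treat the theorem as a bookkeeping statement that collects the case analysis carried out after Lemma \ref{1solution}. The real work is to check that each branch of the case tree is exhaustive and correct.

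Part (i) follows from uniqueness. If $x_0=0$ and $(y_0+1)(z_0+\rho)=\rho$, then $x\equiv 0$ satisfies Equation \eqref{eqmagneticHeisx}, because $h'(0)h(0)=(z_0+\rho)(y_0+1)=\rho$. The right-hand side of that equation is polynomial, so Picard--Lindel\"of (or Cauchy--Kovalevskaya, as already invoked) gives uniqueness, and therefore $x\equiv 0$.

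For part (ii), I first note that the hypothesis $x_0^2+(\rho-(y_0+1)(z_0+\rho))^2\neq 0$ is exactly condition \eqref{nonconst}. The symmetry $(S,-1)$ of Proposition \ref{orbits} justifies reducing to $x_0\ge 0$. From there I would use the first integral \eqref{Eqonev}, which expresses $x'(t)^2$ as $P(x+z_0+\rho)$ with $P$ the quartic \eqref{polinQ}.

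The key facts about $P$ are already recorded: its leading coefficient is negative, $P(z_0+\rho)=x_0^2\ge 0$, and $P$ and $P'$ do not both vanish at $z_0+\rho$. Hence $\eta(t)=x(t)+z_0+\rho$ moves inside a compact interval between two consecutive real roots, or on a half-open interval ending at a double root, where $P\ge 0$. On each monotone arc, $t=\pm\int d\eta/\sqrt{P(\eta)}$, which is the elliptic integral \eqref{ellipticint}. I would then split by the sign of the discriminant $\Delta$ in \eqref{delta}. Case $\Delta<0$ gives formula \eqref{solcase1}, with $\delta_1,\delta_4$ expressed through the roots. Case $\Delta>0$ gives \eqref{solcase21} or \eqref{solcase22}, depending on which positive interval contains $z_0+\rho$. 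Case $\Delta=0$ splits into three subcases by the sign of $\mu$, giving \eqref{sol3a}, \eqref{sol3b1}/\eqref{sol3b2}, and \eqref{sol3c}. The closed forms for $r$ and $\mu$ come from the preceding remark. To get these I would solve $P=P'=0$ jointly, eliminating to a linear equation in $r$ by taking suitable combinations of $P$, $\eta P'$ and $P'$.

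Each closed form should then be checked directly. I would differentiate it and verify that $x'^2=P(x+z_0+\rho)$, using $\mathrm{cn}'=-\mathrm{sn}\,\mathrm{dn}$ together with the identities $\mathrm{sn}^2+\mathrm{cn}^2=1$ and $\mathrm{dn}^2=1-k^2\mathrm{sn}^2$. I would also check the initial conditions: the constants $C_1,C_{21},C_{31},C_4,\dots,C_7$ are chosen so that $x(0)=0$, and the sign of the phase is chosen so that $x'(0)=x_0\ge 0$. Since the candidate is analytic and satisfies the first integral, differentiating gives back \eqref{eqmagneticHeisx} wherever $x'\neq0$, and by continuity it holds everywhere. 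Uniqueness then identifies the candidate with the solution globally. This also takes care of turning points, because the formulas are smooth through them, so no arc-by-arc patching is needed.

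I expect the main obstacle to be the bookkeeping in the degenerate and turning-point situations, rather than any single computation. This includes $x_0=0$ with $P'(z_0+\rho)\neq0$, where the start is at a root and the integral formula is singular, and the $\Delta=0$ subcases. There I need to confirm that when $\mu<0$ the orbit is asymptotic to the double root, which gives the non-periodic $\cosh$ solutions. I also need to confirm that the arccos and arccosh arguments defining $C_4,C_5,C_6$ lie in the right ranges. I would handle this uniformly through the ODE-uniqueness verification above, and not by inverting the integrals.
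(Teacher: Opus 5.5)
Your plan is sound, but at the decisive step it runs in the opposite direction from the paper. The paper derives the formulas. Assuming $x_0>0$, it separates variables in \eqref{Eqonev}, writes $t=\mathcal{E}(x)$ as in \eqref{ellipticint}, and sorts the roots of $P$ by the sign of $\Delta$ using Vieta's relations. It then inverts the integral with the tabulated formulas 259.00 and 252.00 of Byrd--Friedman, plus elementary substitutions when $\Delta=0$. Continuation past the first turning point, the start at a root when $x_0=0$, and global existence are left implicit.

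You instead take the closed forms as given and check them. You verify $x'^2=P(x+z_0+\rho)$, $x(0)=0$ and the sign of $x'(0)$. Then $x''=\tfrac12P'(x+z_0+\rho)=\rho-h'(x)h(x)$ holds on the dense set $\{x'\neq 0\}$, hence everywhere, and uniqueness for \eqref{eqmagneticHeisx} finishes. This says nothing about where the formulas come from. But it handles turning points, starts at a root and the asymptotic approach to the double root in one stroke. It needs only the standard $\mathrm{sn},\mathrm{cn},\mathrm{dn}$ identities and nonvanishing denominators, e.g. $(\delta_4-\delta_1)\mathrm{cn}+\delta_1+\delta_4\ge 2\min(\delta_1,\delta_4)>0$. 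Your elimination at the double root also gives $q_0=r^2(2p_0+3r^2)$ and $p_0^2+3q_0=(p_0+3r^2)^2$, which explains the last row of Table \ref{table:1}.

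Be aware that carrying out the check will not confirm the printed data verbatim in two places. So your sentence that the phase signs are chosen to give $x'(0)=x_0$ is not true as printed.

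First, the $\mu$ displayed in the statement simplifies to $\frac{p_0+3r^2}{12}$. That is one sixth of the $\mu=\frac{p_0+3r^2}{2}$ used in \eqref{sol3a}--\eqref{sol3b2} and in the Remark expressing $r,\mu$ through the initial data. Only the latter satisfies $x'^2=P$; the sign of $\mu$, hence the case split, is unaffected.

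Second, look at \eqref{sol3b1}. The denominator $D=r+\sqrt{r^2-\mu}\cosh(\sqrt{-\mu}\,t+C_5)$ is positive, and $D'(0)=\sqrt{r^2-\mu}\sqrt{-\mu}\sinh C_5\ge 0$. Since $\mu<0$, this gives $x'(0)=2\mu D'(0)/D(0)^2\le 0$. For example, with $\rho=0$, $z_0=1$, $y_0=-\tfrac32$, $x_0=\tfrac{\sqrt3}{2}$ the printed formula produces the trajectory with $x'(0)=-\tfrac{\sqrt3}{2}$. So for $x_0>0$ the phase must be $\sqrt{-\mu}\,t-C_5$. By the same computation, $C_6$ in \eqref{sol3b2} must be $+\arccosh(\cdots)$.

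With these corrections your verification goes through.
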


\renewcommand{\arraystretch}{1.9}

\begin{table}
	\begin{tabular}{|c|c|c|c| }
		\hline			
		Condition & $x(t)$ & Period & Image \\
		\hline\hline
		$\Delta < 0$ & 
		\eqref{solcase1} & $\frac{8K(k)}{\sqrt{|r_1-r_2|| r_4-r_2|}}$ & $[r_1-z_0-\rho,r_2-z_0-\rho]$ \\ \hline
		$\Delta > 0$, $r_1\leq z_0+\rho\leq r_2$ & \eqref{solcase21} & $\frac{8K(k)}{\sqrt{(r_4-r_2)(r_3-r_1)}}$ & $[r_1-z_0-\rho, r_2-z_0-\rho]$ \\ \hline
		$\Delta > 0$, $r_3\leq z_0+\rho\leq r_4$ & \eqref{solcase22} & $\frac{8K(k)}{\sqrt{(r_4-r_2)(r_3-r_1)}}$ & $[r_3-z_0-\rho, r_4-z_0-\rho]$\\ \hline
		$\Delta = 0$, $\mu>0$ & \eqref{sol3a} & $\frac{2\pi}{\sqrt{\mu}}$ & {\tiny $\left[-r- \sqrt{-2(p_0+r^2)}-z_0-\rho, -r+ \sqrt{-2(p_0+r^2)}-z_0-\rho\right]$} \\ \hline
		$\Delta = 0$, $\mu<0$, $z_0+\rho> r$ & \eqref{sol3b1} & non & $\left(r-z_0-\rho,-r+ \sqrt{-2(p_0+r^2)}-z_0-\rho\right]$ \\ \hline
		$\Delta = 0$, $\mu<0$, $z_0+\rho <r$ & \eqref{sol3b2} & non & $\left[-r- \sqrt{-2(p_0+r^2)}-z_0-\rho,r-z_0-\rho\right)$ \\ \hline
		$\Delta = 0$, $p_0^2+3q_0 =0$  & \eqref{sol3c} & non  & \begin{tabular}{c}
			$\left(-\sqrt[3]{\rho}-z_0-\rho,3\sqrt[3]{\rho}-z_0-\rho\right]$ if $\rho>0$\\ $\left[3\sqrt[3]{\rho}-z_0-\rho,-\sqrt[3]{\rho}-z_0-\rho\right)$ if $\rho<0$
		\end{tabular}\\ \hline
	\end{tabular}
	\caption{ }
	\label{table:1}
\end{table}

\begin{rem}
	By Lemma \ref{lem2} and Proposition \ref{orbits}, the magnetic trajectory satisfying $\sigma(0)=e\in H_3$ and $\sigma'(0)=x_0 e_1 + y_0 e_2 + z_0 e_3$ with $x_0<0$ is given by:
	
	\smallskip
	
	$\sigma(t)=((S,-1)\cdot\gamma)(t)=exp(x(-t)e_1-y(-t)e_2-z(-t)e_3)$,
	
	\smallskip
	
	where $\gamma(t)$ is the magnetic trajectory given in the previous theorem with $\gamma'(0)=-x_0 e_1 + y_0 e_2 + z_0 e_3$.
\end{rem}

\begin{rem}
	Every magnetic trajectory is defined for all $t\in\RR$.
	Also a magnetic trajectory $\gamma(t)$ corresponding to a Lorentz force $F_{e_1,\rho}$ will be a one-parameter subgroup if and only if $x_0= 0$ and $(y_0+1)(z_0+\rho)= \rho$. This trajectory is central, this means $\gamma(t)\in Z(H_3)$ for all $t$, if and only if $y_0=z_0=0$, that is when $\gamma(t)= e$ for all $t$.    
\end{rem}

\begin{rem}
	Fixed $\rho\neq 0$, every situation in Table \ref{table:1} is possible under suitable initial conditions. We assume $\rho>0$ ( the case $\rho<0$ is analogous). First we observe that $\Delta$ is a polynomial on $z_0$ of degree $5$ if $\Vert V_0+e_2\Vert\neq 0$, or $3$ if $\Vert V_0+e_2\Vert=0$, so it takes all real values for fixed $x_0$ and $y_0$. More explicitly, 
	\begin{itemize}
		\item if  $x_0=0$, $y_0=0$ and $z_0=-\rho$, thus $\Delta<0$;
		\item if  $x_0=0$, $y_0<-\frac{3}{2}\sqrt[3]{2}\rho^{2/3}-1$ and $z_0=-\rho$, then $\Delta>0$ and $z_0+\rho =0=r_3$;
		\item if  $x_0=0$, $y_0=-2\rho-\frac{3}{4}$ and $z_0=-\rho-1$ then $\Delta>0$ and $z_0+\rho=-1=r_1$ or $r_2$.
	\end{itemize}
	
	On the other side,  examples with $\Delta=0$ can be obtained as follows: 
	\begin{itemize}
		\item if $(x_0,y_0,z_0)=(0, 3\rho^{2/3}-1, 3\rho^{1/3}-\rho)$ then $\mu=0$;
		\item if $(x_0,y_0,z_0)=(0, -\frac{3}{2}\sqrt[3]{2}\rho^{2/3}-1, -\rho)$ then $\mu>0$;
		\item if $(x_0,y_0,z_0)=(0, 2(2\rho)^{2/3}-1, \frac{5}{2}(2\rho)^{1/3}-\rho)$ then $\mu<0$ and $z_0+\rho > r$;
		\item and if $(x_0,y_0,z_0)=(0, 3\rho^{2/3}-1, -\frac{15}{4} \rho^{1/3}-\rho)$ then  $\mu<0$ and $z_0+\rho < r$.
	\end{itemize}	 
\end{rem}


	\subsection{Magnetic trajectories that are geodesics}
	Now we shall study the question of which magnetic trajectories are geodesics. From Equation 
	\eqref{mageq}, a magnetic trajectory $\gamma: I \to M$ that  is geodesic must satisfy 
	$$F\gamma'(t)=0, $$
	for $t$ in an interval in $\RR$.  Thus, if the Lorentz map $F$  is non-singular, there is no non-trivial magnetic trajectory being geodesic. 
	
	So, the equations for geodesics on any 2-step nilpotent Lie group $(N,\la\,,\, \ra)$ as above follow (see Equation \eqref{magnetic-2step})
		\begin{equation}\label{geodesic}
		\left\{ \begin{array}{rcl}
			V''(t)-j(Z'(t)+\frac12 [V'(t), V(t)])V'(t) & = & 0\\
			Z''(t)+\frac12 [V''(t),V(t)] & = & 0, 
		\end{array} \right.
	\end{equation}
	and  we  also have
	$$F(V'(t) + Z'(t) + \frac12 [V'(t),V(t)])\equiv 0.$$
\begin{example} Let $(N,\la\,,\,\ra)$ be a 2-step nilpotent Lie group equipped with a left-invariant metric and left-invariant Lorentz force $F$. 
	\begin{itemize}
		\item If $Z_0\in \ker F \cap \zz$ then $\exp(tZ_0)$ is a magnetic trajectory which is also a geodesic.
		
		\item If $V_0\in \ker F \cap \vv$ then $\exp(t V_0)$ is a magnetic trajectory which is also a geodesic.
	\end{itemize}
\end{example}

\begin{lem}\label{lemMagGeod} Let $(N, \la\,,\,\ra)$ be a 2-step nilpotent Lie group equipped with a left-invariant metric and a left-invariant Lorentz force $F$. 
\begin{enumerate}[(i)]
	\item If $\ker F\cap \vv=\{0\}$ then the only magnetic trajectories which are geodesics have the form
	$$\gamma(t)=\exp(t(V_0+Z_0))\qquad \mbox{ for  } Z_0\in \ker F\cap \zz \mbox{ and } V_0\in \ker j(Z_0).$$
	
	\item  If $\ker F= \RR V_0$ for some $V_0\in\vv$ then the only magnetic trajectories which are geodesics have the form
	$$\gamma(t)=\exp(t \nu V_0)\qquad \mbox{ for  some } \nu\in\RR.$$
\end{enumerate}

	\end{lem}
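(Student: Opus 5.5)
The plan is to work in the Lie algebra with the left-translated velocity
$$W(t)=dL_{\gamma(t)^{-1}}\gamma'(t)=V'(t)+Z'(t)+\tfrac12[V'(t),V(t)],$$
which is exactly the vector appearing on the right-hand side of \eqref{magnetic-2step}. A magnetic trajectory is a geodesic precisely when $F\,W(t)=0$ for all $t$, i.e. $W(t)\in\ker F$. Split $W=W_\vv+W_\zz$ according to \eqref{decomp}.

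First I rewrite the geodesic system \eqref{geodesic} in terms of $W$. Differentiating $W_\zz=Z'+\frac12[V',V]$ gives $W_\zz'=Z''+\frac12[V'',V]$, since $[V',V']=0$. So the second equation reads $W_\zz'=0$, i.e. $W_\zz(t)\equiv Z_0$ is constant. The first equation reads $W_\vv'=j(Z_0)W_\vv$. This reformulation is the standard Euler equation for left-invariant metrics on 2-step nilpotent groups and is the only real computation needed. Conversely, if $W$ is constant, then $\gamma(t)=\exp(tW)$, because $\gamma(0)=e$ and $\gamma^{-1}\gamma'$ is constant, so the curve is the one-parameter subgroup.

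For (i), take $s,t$ in the interval. Then $W(t)-W(s)=W_\vv(t)-W_\vv(s)$, since $W_\zz$ is constant. This difference lies in $\ker F$ and also in $\vv$, hence it vanishes because $\ker F\cap\vv=\{0\}$. Thus $W_\vv\equiv V_0$ is constant. The Euler equation then forces $j(Z_0)V_0=0$, that is $V_0\in\ker j(Z_0)$, and $\gamma(t)=\exp(t(V_0+Z_0))$ with $V_0+Z_0\in\ker F$.

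The delicate point is passing from $V_0+Z_0\in\ker F$ to the separate memberships stated in (i), namely $Z_0\in\ker F$ (and then $V_0\in\ker F\cap\vv=\{0\}$ unless it drops out). I would first treat the cases $V_0=0$ and $Z_0=0$, which are immediate. When both are nonzero, I would try to use the closedness identity for $F$ together with $j(Z_0)V_0=0$. I expect this step to be the main obstacle and would check it against the Heisenberg forms $F_{e_1,\rho}$, whose kernel is $\RR(e_2-\rho e_3)$. There $j(e_3)e_2\neq0$, so the mixed case cannot occur and the conclusion holds.

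For (ii), $\ker F=\RR V_0\subset\vv$ gives $W(t)=\nu(t)V_0$. Hence $W_\zz=0$, so $Z_0=0$ and $W_\vv'=j(0)W_\vv=0$. Therefore $\nu$ is constant, and $\gamma(t)=\exp(t\nu V_0)$. One checks directly that this curve satisfies $Z\equiv0$, $[V',V]=0$ and $FW=0$, so it is indeed a magnetic trajectory that is a geodesic.
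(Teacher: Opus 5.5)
Up to the point where you get $\gamma(t)=\exp(t(V_0+Z_0))$ with $V_0+Z_0\in\ker F$ and $j(Z_0)V_0=0$, your argument is the paper's: your difference argument for $W_\vv$ is the integrated form of the paper's $FV''=0$. Your part (ii) is complete.

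The gap is the last step of (i). You only say you would \emph{try} the closedness identity to get $Z_0\in\ker F$. That step cannot succeed, because it is false for general 2-step nilpotent groups. Here is a counterexample.
\begin{itemize}
\item Take $\nn=\hh_3\oplus\RR e_4$ with $e_1,\dots,e_4$ orthonormal and $[e_1,e_2]=e_3$. Then $\zz=\mathrm{span}\{e_3,e_4\}$, $\vv=\mathrm{span}\{e_1,e_2\}$ and $j(e_4)=0$.
\item Define the skew-symmetric map $F$ by $Fe_1=e_2$, $Fe_2=-e_1+e_4$, $Fe_3=0$, $Fe_4=-e_2$. In the dual basis this is $\omega_F=(e^1-e^4)\wedge e^2$.
\item The closedness condition $\la [X,V],FW\ra+\la [V,W],FX\ra+\la [W,X],FV\ra=0$ reduces here to $\la Fe_3,e_4\ra=0$, which holds. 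So $F$ is a Lorentz force.
\item One computes $\ker F=\mathrm{span}\{e_1+e_4,e_3\}$, hence $\ker F\cap\vv=\{0\}$.
\item The curve $\gamma(t)=\exp(t(e_1+e_4))$ has constant $W=e_1+e_4\in\ker F$ and $j(e_4)e_1=0$. So it is both a geodesic and a magnetic trajectory.
\item However $Z_0=e_4\notin\ker F$, since $Fe_4=-e_2$. As the splitting $\nn=\vv\oplus\zz$ is unique, $\gamma$ is not of the form claimed in (i).
\end{itemize}
The paper's own proof has the same hole: it simply writes ``$Z(t)=tZ_0$ with $Z_0\in\ker F$'' with no justification.

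What both arguments actually establish is a weaker statement: $\gamma(t)=\exp(t(V_0+Z_0))$ with $V_0+Z_0\in\ker F$ and $V_0\in\ker j(Z_0)$. The form stated in (i) does follow under an extra hypothesis such as $j(Z)|_{\vv}$ injective for every $Z\neq 0$ (nonsingular groups, in particular $H_3$). Then $j(Z_0)V_0=0$ forces $V_0=0$ or $Z_0=0$, which are exactly the two immediate cases you already handle. Your check with $F_{e_1,\rho}$ is an instance of this, and this corrected version is all that Corollary \ref{mageo} needs.
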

			\begin{proof} (i) From the second equation in \eqref{geodesic} we have $Z'(t)+\frac12[V'(t),V(t)]\equiv Z_0$. On the other side , it holds $0=F(V'(t)+Z_0)$ which implies $FV''(t)=0$. So, one gets $V'(t)\equiv V_0$, and $V(t)=V_0t$. The first equation in \eqref{geodesic} says that $J(Z_0)V_0=0$, and from the second equation in \eqref{geodesic} we have $Z(t)=tZ_0$ with $Z_0\in \ker F$. Thus $\gamma(t)=\exp(t(V_0+Z_0))$.

		(ii) As in (i), $Z'(t)+\frac12 [V'(t),V(t)]\equiv Z_0$. By hypothesis, \( F(V'(t) + Z_0) = 0 \) which implies \( Z_0 = 0 \) and \( V'(t) = a(t)V_0 \) for some smooth function \( a : I \to \mathbb{R} \). Consequently, \( [V'(t), V(t)] = 0 \) and  so \( Z'(t) \equiv 0 \). It follows that \( Z(t) \equiv 0 \). Using the first equation in~\eqref{geodesic}, we  obtain \( V''(t) = a'(t)V_0 = 0 \). Therefore,
		\[
		V(t) = t \nu V_0, \qquad \text{for some } \nu \in \mathbb{R} \setminus \{0\}.
		\]
		
			\end{proof}
			
			\begin{cor}\label{mageo}
				On the Heisenberg Lie group $(H_3, \la\,,\,\ra)$ the only magnetic trajectories which are also geodesics are
				the one-parameter subgroups
				\begin{itemize}
					\item $\exp(tZ_0)$ where $Z_0\in \zz$ for $F_{0,\rho}$ with $\rho\neq 0$, 
					\item $\exp(tV_0)$ where $V_0\in \vv$ for a Lorentz force $ F_{\nu JV_0,0}$ with $\nu\neq 0$.
				\end{itemize}
			\end{cor}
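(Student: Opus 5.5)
The plan is to combine the kernel analysis of Lemma \ref{lemMagGeod} with the explicit matrix form \eqref{matrixF} of a left-invariant Lorentz force $F=F_{U,\rho}$, $U=\beta e_1+\alpha e_2$. A nonzero skew-symmetric $3\times 3$ matrix has one-dimensional kernel. Reading off \eqref{matrixF}, the kernel of $F_{U,\rho}$ is spanned by $(\alpha,-\beta,\rho)=-JU+\rho e_3$, since $J(\beta,\alpha)=(-\alpha,\beta)$. So the argument splits according to where this kernel vector lies: in $\zz$, in $\vv$, or in neither.

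First case, $U=0$ and $\rho\neq 0$, i.e.\ $F_{0,\rho}$. Here $\ker F=\zz$, so $\ker F\cap\vv=\{0\}$ and part (i) of Lemma \ref{lemMagGeod} applies. The magnetic geodesics are $\exp(t(V_0+Z_0))$ with $Z_0\in\zz$ and $V_0\in\ker j(Z_0)$. On $\hh_3$ we have $j(Z_0)=z J$ for $Z_0=ze_3$, which is invertible when $z\neq 0$. If $Z_0\neq 0$ this forces $V_0=0$. If $Z_0=0$ the lemma's proof gives $F(V_0)=0$ with $V_0\in\vv$, so again $V_0=0$. Either way we get exactly the curves $\exp(tZ_0)$, and conversely these are geodesics with $F\gamma'=0$ by the preceding Example.

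Second case, $\rho=0$ and $U\neq 0$. Then $\ker F=\RR\, JU\subset\vv$, and part (ii) of Lemma \ref{lemMagGeod} gives the curves $\exp(t\nu JU)$. Setting $V_0=\nu JU$, we have $U=-\nu^{-1}JV_0$, so $F=F_{\nu' JV_0,0}$ with $\nu'=-1/\nu\neq 0$. This is the second item of the corollary.

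Third case, $U\neq 0$ and $\rho\neq 0$. The kernel vector $W=-JU+\rho e_3$ has nonzero components in both $\vv$ and $\zz$, so $\ker F\cap\vv=\{0\}$ and part (i) applies again. It requires $Z_0\in\ker F\cap\zz=\{0\}$, hence $j(Z_0)=0$, and $V_0\in\vv$ with $F(V_0+Z_0)=FV_0=0$, which forces $V_0=0$. Only the constant curve remains, so no nontrivial curve occurs and this case contributes nothing. This agrees with the earlier Remark that a one-parameter subgroup for $F_{e_1,\rho}$ is central only when it is trivial.

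The main obstacle is that Lemma \ref{lemMagGeod}(i) as stated lists $V_0\in\ker j(Z_0)$ but does not display the constraint $F(V_0+Z_0)=0$, which is used in its proof. I would make this explicit by citing the line $0=F(V'(t)+Z_0)$ from that proof. With it, the case $Z_0=0$ leaves $V_0\in\ker F\cap\vv=\{0\}$, and the mixed case $U\neq 0$, $\rho\neq 0$ is excluded. Two further points need care: checking $\ker j(Z_0)=0$ for $Z_0\neq 0$ on $\hh_3$, which holds since $\la j(e_3)e_1,e_2\ra=\la e_3,[e_1,e_2]\ra=1$, and checking that $F=0$ is excluded.
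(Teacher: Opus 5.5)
Your proposal is correct and follows the paper's own route: compute $\ker F_{U,\rho}=\RR(JU-\rho e_3)$, then treat $\rho=0$ (the kernel lies in $\vv$, so Lemma \ref{lemMagGeod}(ii) applies) separately from $\rho\neq 0$ (the kernel meets $\vv$ trivially, so Lemma \ref{lemMagGeod}(i) applies), and use $\ker j(e_3)=\{0\}$. You also make explicit the constraint $F(V_0+Z_0)=0$ from the lemma's proof, which discards $\exp(tV_0)$ when $Z_0=0$ and disposes of the case $U\neq 0$, $\rho\neq 0$; the paper's one-line argument leaves this implicit, so your version is a welcome tightening.
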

			
			\begin{proof} Observe that \( \ker F_{U,\rho} = \mathrm{span}\{JU - \rho e_3\} \). Thus, if $\rho=0$, then $\ker F_{U,0}\subset \vv$. If $\rho\neq 0$ then $\ker F_{U,\rho}\cap \vv=\{0\}$. In both cases the previous lemma is applied. Note that \( \ker j(e_3) = \{0\} \).  
						\end{proof}
		\begin{rem}
			Clearly, the fact that only one-parameter subgroups are both geodesics and magnetic trajectories is a specific feature of \( H_3 \). Indeed, in the trivial extension $H_3 \times \mathbb{R}^n$ with $n \ge 2 $, one can construct a  nontrivial Lorentz force $F$ such that the restriction to $\hh_3$ is $F_{|_{\hh_3}} \equiv 0$. Consequently, every geodesic of $H_3 \subset H_3 \times \mathbb{R}^n$ is a magnetic trajectory for such an $F$.
			
		\end{rem}	
			\section{The inverse problem of the calculus of variations}
			
			In this section  we deal with the inverse problem of the calculus of variations. Explicitly, for the magnetic equations on $H_3$, the question is	to determine whether the solution curves can be identified with the totality of extremals of some variational problem
			$$\int  L(t,y_i,y_i')\, dt = \min,$$
			and in the  affirmative case to find possible functions $L$. This is solved in this section.

			The question now is to prove the existence of  a Lagrangian  for Equations \eqref{magnetic-Heis}. And moreover to determine precise conditions for a given function to became one. 
			For such Lagrangian, the corresponding  Euler-Lagrange equations  (Eq. \eqref{ELeq} below)  are equivalent to the system \eqref{magnetic-Heis}.
			
			Assume that there exists a differentiable function $L:\RR\times \RR^n\times \RR^n \to \RR$, which will play the role of Lagrangian. 
			Let $A$ be the action functional given by
			$$A(u)=\int_0^T  L(t,y_i,y_i')\, dt,$$ 
			where $u$ is a curve on $\RR^n$, $u:[0,T]\to \RR^n$, $u(t)=(y_1(t), \hdots y_n(t))$. The principle of least action says that, given an initial state $x_0$ and a final state $x_1$ in $\RR^n$, the trajectory that the system determined by $L$ will actually follow must be a minimizer of the action functional $S$ satisfying the boundary conditions $u(0) = x_0$, $u(T) = x_1$.
			The critical points (and hence minimizers) of $A$ must satisfy the Euler–Lagrange equations:
			\begin{equation}\label{ELeq}
				\frac{d}{dt} \frac{\partial L}{\partial y_i'} - \frac{\partial L}{\partial y_i} = 0 \qquad \mbox{ for all }i=1, \hdots, n.
			\end{equation}
			
		Recall that a 2-form $\Omega$ is called {\em exact} whenever there exists a 1-form such that $d\theta=\Omega$. 
		
		Note that a basis of left-invariant 1-forms is given by
		$$e^1=dx\quad e^2 = dy \quad e^3= dz + \frac12 (y dx - x dy), $$
		and one gets $de^1 = 0 = de^2$, while $d e^3 =  e^1 \wedge e^2$. 
	
	More generally, 	let $\theta$ be a differential 1-form on $H_3$: 
	$$\theta =f_1 dx + f_2 dy+ f_3 dz, \mbox{ where }f_i\in C^{\infty}(\RR^3).$$ 
Thus,  its differential follows:
	$$d\theta=(\frac{\partial f_2}{\partial x} - \frac{\partial f_1}{\partial y}) dx\wedge dy + (\frac{\partial f_3}{\partial x} - \frac{\partial f_1}{\partial z}) dx\wedge dz + (\frac{\partial f_3}{\partial y} - \frac{\partial f_2}{\partial z}) dy\wedge dz.$$
		On the other hand a Lorentz force $F$ with matrix as in \eqref{matrixF}  can be identified with the following closed  left-invariant 2-form
		$$\begin{array}{rcl}
			\omega_F & = &\rho e^1 \wedge e^2 + \beta e^1\wedge e^3 + \alpha e^2\wedge e^3\\			
			& = & \rho dx \wedge dy + \beta dx \wedge (dz - \frac12 x dy) + \alpha dy\wedge(dz+\frac12 y dx).
		\end{array}
		$$
		
		\begin{lem}\label{closed2areexact} Any left-invariant closed 2-form $\omega_F=\la F\cdot,\cdot\ra$ on $H_3$ verifies $d\theta =\omega_F$, if and only if the 1-form $\theta$  given by
			$$
			\theta =f_1 dx + f_2 dy+ f_3 dz$$
			with $f_i\in C^{\infty}(\RR^3)$ for $i=1,2,3$, satisfy
			\begin{equation}\label{exactforms}
				\frac{\partial f_2}{\partial x} - \frac{\partial f_1}{\partial y}  =  \rho - \frac{\beta}{2} x   - \frac{\alpha}{2} y \qquad 
				\frac{\partial f_3}{\partial x} - \frac{\partial f_1}{\partial z}  =  \beta \qquad
				\frac{\partial f_3}{\partial y} - \frac{\partial f_2}{\partial z}  = \alpha.
			\end{equation}
			 In particular, for 
			$\theta =(-\frac{\rho}{2}y +\frac{\beta}2 xy) dx +(\frac{\rho}{2}x -\frac{\alpha}2 xy) dy +(\rho+\beta x +\alpha y) dz$, one gets
			$$d\theta = \rho e^1 \wedge e^2 + \beta e^1\wedge e^3 + \alpha e^2\wedge e^3 .$$
			Note that the 1-form $\theta$ is not left-invariant in general. The left-invariant situation occurs whenever  $\alpha=\beta=0$.
		\end{lem}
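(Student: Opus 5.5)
The plan is to rewrite $\omega_F$ in the coordinate basis $dx\wedge dy$, $dx\wedge dz$, $dy\wedge dz$. Then compare it, coefficient by coefficient, with the formula for $d\theta$ computed just before the statement. Since $\{dx\wedge dy, dx\wedge dz, dy\wedge dz\}$ is a global frame of $\Lambda^2 T^*\RR^3$, two 2-forms agree exactly when their three coefficient functions agree. So the equivalence reduces to computing the coefficients of $\omega_F$.

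First, expand $\omega_F$ using $e^1=dx$, $e^2=dy$ and $e^3=dz+\frac12(y\,dx-x\,dy)$:
$$\beta\, e^1\wedge e^3=\beta\, dx\wedge dz-\tfrac{\beta}{2}x\, dx\wedge dy,\qquad \alpha\, e^2\wedge e^3=\alpha\, dy\wedge dz-\tfrac{\alpha}{2}y\, dx\wedge dy,$$
where the second uses $dy\wedge dx=-dx\wedge dy$. This gives
$$\omega_F=\Bigl(\rho-\tfrac{\beta}{2}x-\tfrac{\alpha}{2}y\Bigr)dx\wedge dy+\beta\, dx\wedge dz+\alpha\, dy\wedge dz.$$
Equating these coefficients with those of $d\theta$ yields exactly the three equations \eqref{exactforms}, which proves the ``if and only if''. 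Note that closedness of $\omega_F$ is automatic here, since all left-invariant 2-forms on the 3-dimensional $H_3$ are closed. The Poincaré lemma on $\RR^3$ then already guarantees that solutions $\theta$ exist. The explicit $\theta$ makes this concrete.

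For the particular $\theta$, take $f_1=-\frac{\rho}{2}y+\frac{\beta}{2}xy$, $f_2=\frac{\rho}{2}x-\frac{\alpha}{2}xy$ and $f_3=\rho+\beta x+\alpha y$, and check \eqref{exactforms} directly:
$$\partial_x f_2-\partial_y f_1=\tfrac{\rho}{2}-\tfrac{\alpha}{2}y+\tfrac{\rho}{2}-\tfrac{\beta}{2}x,\qquad \partial_x f_3-\partial_z f_1=\beta,\qquad \partial_y f_3-\partial_z f_2=\alpha.$$
These are precisely the required coefficients, so $d\theta=\rho e^1\wedge e^2+\beta e^1\wedge e^3+\alpha e^2\wedge e^3$.

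For the invariance remark, expand $\theta$ in the invariant coframe: $dz=e^3-\frac12(y\,e^1-x\,e^2)$. The coefficients of $\theta$ with respect to $e^1,e^2,e^3$ must then be constant for left-invariance. These coefficients contain the terms $\beta x$ and $\alpha y$, which are constant only if $\alpha=\beta=0$. Conversely, when $\alpha=\beta=0$ the given $\theta$ is not literally invariant (the constant $\rho\,dz$ term contributes nonconstant coefficients in the coframe). In that case one should instead exhibit $\theta=\rho e^3$, which satisfies $d(\rho e^3)=\rho\, e^1\wedge e^2$; this is the intended left-invariant primitive.

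The main obstacle is only bookkeeping: the sign from $dy\wedge dx$ and the factor $\frac12$ in $e^3$. I would double-check these by recomputing $de^3=e^1\wedge e^2$ from the same expansion.
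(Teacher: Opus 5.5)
Your coefficient comparison is the paper's own argument, and it is correct. The paper writes $\omega_F=\rho\,dx\wedge dy+\beta\,dx\wedge(dz-\frac12 x\,dy)+\alpha\,dy\wedge(dz+\frac12 y\,dx)$ and matches it with the coordinate formula for $d\theta$. Your expansion, the resulting equations \eqref{exactforms}, and your direct check of the explicit $\theta$ all agree with this.

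\textbf{Sign error in the invariance remark.} With $e^3=dz+\frac12(y\,dx-x\,dy)$ you get
$de^3=\frac12(dy\wedge dx-dx\wedge dy)=-e^1\wedge e^2$.
So $d(\rho e^3)=-\rho\,e^1\wedge e^2$, and the left-invariant primitive of $\rho\,e^1\wedge e^2$ is $-\rho e^3$, not $\rho e^3$. Your own criterion confirms this: $\rho e^3$ has $f_1=\frac{\rho}{2}y$ and $f_2=-\frac{\rho}{2}x$, so $\partial_x f_2-\partial_y f_1=-\rho$. The recomputation of $de^3$ you planned at the end would have caught it. The paper's stated identity $de^3=e^1\wedge e^2$ has the same sign slip. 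It does not affect the lemma's main equivalence, which never uses $de^3$.

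\textbf{The ``only if $\alpha=\beta=0$'' reading.} If the remark means that $\omega_F$ has a left-invariant primitive only when $\alpha=\beta=0$, checking one particular $\theta$ does not prove it. Argue instead that any invariant $\theta=a e^1+b e^2+c e^3$, with $a,b,c$ constant, has $d\theta=-c\,e^1\wedge e^2$. This has no $e^1\wedge e^3$ or $e^2\wedge e^3$ component, which forces $\beta=\alpha=0$.
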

		
		\begin{rem} 
		Clearly, one can find 1-forms $\theta_i$ such that
			\begin{equation}\label{closed-cond}
				\begin{array}{rcl}
					d\theta_1 & = & e^1\wedge e^2 = dx \wedge dy\\
					d \theta_2 & = & e^1\wedge e^3 = dx \wedge dz - \frac12 x\, dx \wedge dy \\
					d \theta_3 & = & e^2\wedge e^3 = dy \wedge dz - \frac12 y\, dx \wedge dy\\
				\end{array}
			\end{equation}
		\end{rem}
			
				The aim now is to find Lagrangians, so that  magnetic equations  for any left-invariant Lorentz force on $H_3$ correspond to Euler-Lagrange equations for such Lagrangian $L$. 
			
			\begin{example}\label{exaexac}
				As noticed in \cite{EGM}, there is a Lagrangian for left-invariant exact 2-forms. In fact, take:
				$$L(t,x,y,z,\dot{x},\dot{y},\dot{z})= \frac12 \left( \dot{x}^2+\dot{y}^2+(\dot{z}+\frac12(\dot{x}y-x\dot{y}))^2\right)-\rho\left(\dot{z}+\frac12(\dot{x}y-x\dot{y})\right).$$
			\end{example}

		For an exact 2-form $\Omega=d\theta$
		with $\theta$ a smooth 1-form on $M$, the magnetic flow can be obtained as the Euler-Lagrange flow of the Lagrangian
		$$L(p,v)=\frac12 \la v, v\ra_p - \theta_p(v), \qquad (p,v)\in TM.$$

			\begin{thm} \label{Lagrangian}
				The system of magnetic equations on the Heisenberg Lie group $H_3$ is a Lagrangian system. In fact,  Lagrangian functions can be written as
				$$	L= T - \theta, \quad \mbox{ with} \quad T= \frac12\left(\dot{x}^2+\dot{y}^2+(\dot{z}+\frac12(\dot{x}y-x\dot{y}))^2\right) $$ and $\theta$ satisfying Equation \eqref{exactforms}.
				An explicit Lagrangian for the system of equations \eqref{magnetic-Heis} is given by
				$$\begin{array}{rcl}
					L(t,x,y,z,\dot{x},\dot{y},\dot{z}) & = & \frac12 \left( \dot{x}^2+\dot{y}^2+(\dot{z}+\frac12(\dot{x}y-x\dot{y}))^2\right) \\
					& &  +(\frac{\rho}{2} y -\frac{\beta}{2}xy) \dot{x} + (-\frac{\rho}{2} x +\frac{\alpha}{2}xy) \dot{y} - (\beta x +\alpha y)\dot{z}.
				\end{array}$$
			\end{thm}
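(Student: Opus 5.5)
The plan is to use the general principle recalled just before the statement: for a Riemannian manifold $(M,g)$ and an exact magnetic form $\Omega=d\theta$, the Euler--Lagrange equations of $L(p,v)=\frac12\la v,v\ra_p-\theta_p(v)$ are exactly $\nabla_{\gamma'}\gamma'=F\gamma'$ with $\Omega=\la F\cdot,\cdot\ra$ (up to the sign convention relating $\Omega$ and $F$). I will first verify this principle in coordinates, since the sign conventions matter. The kinetic part $T=\frac12 g_{ij}\dot x^i\dot x^j$ has Euler--Lagrange expression $\frac{d}{dt}\partial_{\dot x^k}T-\partial_{x^k}T=g_{kl}(\ddot x^l+\Gamma^l_{ij}\dot x^i\dot x^j)$, which is the covariant acceleration lowered by the metric. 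For $\theta=f_i\,dx^i$, the linear term gives $\frac{d}{dt}f_k-\partial_k f_i\,\dot x^i=(\partial_i f_k-\partial_k f_i)\dot x^i=d\theta(\partial_i,\partial_k)\dot x^i$. Combining the two, the Euler--Lagrange equations of $T-\theta$ read $g(\nabla_{\gamma'}\gamma',\partial_k)=d\theta(\gamma',\partial_k)=\Omega(\gamma',\partial_k)=g(F\gamma',\partial_k)$. These are equivalent to \eqref{mageq} with $q=1$, because $g$ is nondegenerate.

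Next I specialize to $H_3$. In the coordinates given by $\exp$, the left-invariant metric is $(e^1)^2+(e^2)^2+(e^3)^2$, with $e^3=dz+\frac12(y\,dx-x\,dy)$. Evaluated on a velocity, $e^3$ gives $\dot z+\frac12(\dot x y-x\dot y)$, so $T$ is exactly the function written in the statement. By Lemma \ref{closed2areexact}, every left-invariant closed $\omega_F$ is exact, and the $1$-forms $\theta$ with $d\theta=\omega_F$ are precisely those satisfying \eqref{exactforms}. This gives the first assertion: every $L=T-\theta$ with $\theta$ as in \eqref{exactforms} is a Lagrangian for the magnetic system. The Euler--Lagrange system is then equivalent to the coordinate form \eqref{magnetic-Heis}, which was derived from \eqref{magnetic-2step} for the same curves $\gamma=\exp(xe_1+ye_2+ze_3)$. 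Equivalence of the two systems is enough, since both are equivalent to \eqref{mageq}.

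For the explicit Lagrangian I substitute the particular $\theta$ from Lemma \ref{closed2areexact}, namely $\theta=(-\frac\rho2 y+\frac\beta2 xy)dx+(\frac\rho2 x-\frac\alpha2 xy)dy+(\rho+\beta x+\alpha y)dz$. Then $-\theta(\dot\gamma)$ gives the displayed terms, except for an extra $-\rho\dot z$. That term is a total derivative and so does not change the Euler--Lagrange equations; equivalently, $\rho\,dz$ is closed and removing it does not affect \eqref{exactforms}. As a sanity check I would recompute the $x$-equation directly and compare it with the first line of \eqref{magnetic-Heis}.

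The main obstacle is the sign and normalization bookkeeping. Several conventions have to agree: $\Omega_F(U,V)=g(FU,V)$ versus $\omega_F=\la F\cdot,\cdot\ra$ with the matrix \eqref{matrixF}, the sign in front of $\theta$, and the charge $q$. Depending on these, one may need $L=T+\theta$, or $\theta$ replaced by $-\theta$, to match \eqref{magnetic-Heis} exactly. I would settle this by checking a single component, the $z$-equation, which involves only $\beta\dot x+\alpha\dot y$.
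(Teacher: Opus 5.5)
Your proof is correct, but it takes a different route from the paper's.

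\textbf{The paper's route.} The paper argues by direct computation for the single explicit Lagrangian. It computes the three Euler--Lagrange expressions \eqref{ecu1}--\eqref{ecu3} and reads off the $z$-equation as the third line of \eqref{magnetic-Heis}. It then multiplies that equation by $\frac12 y$ and $\frac12 x$ and substitutes, which reduces the $x$- and $y$-expressions to the first two lines. The general claim, that every $T-\theta$ with $\theta$ satisfying \eqref{exactforms} is a Lagrangian, is never checked in the proof. It rests only on the standard fact recalled just before the theorem, together with Lemma \ref{closed2areexact}.

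\textbf{Your route.} You prove that standard fact in coordinates on an arbitrary Riemannian manifold: when $d\theta=\Omega_F$, the Euler--Lagrange operator of $T-\theta$ equals $g(\nabla_{\gamma'}\gamma'-F\gamma',\partial_k)$. You then combine it with Lemma \ref{closed2areexact}, so the explicit Lagrangian becomes a special case. You also correctly note that the explicit Lagrangian uses the lemma's $\theta$ minus $\rho\,dz$, a gauge change the paper passes over silently.

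\textbf{What each buys.} Your approach proves the general assertion and makes the role of the gauge freedom transparent. Its cost is that the link to \eqref{magnetic-Heis} is inherited from the paper's derivation of \eqref{magnetic-Heis} out of \eqref{magnetic-2step}, not checked. The paper's computation confronts the coordinate equations directly, but covers only one Lagrangian.

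Two small remarks.
\begin{enumerate}
\item Your closing paragraph on signs is unnecessary. With $\Omega_F(U,V)=g(FU,V)$, your first paragraph already shows that $L=T-\theta$, not $T+\theta$, yields $\nabla_{\gamma'}\gamma'=F\gamma'$ with $q=1$. The $z$-equation $\frac{d}{dt}\bigl(\dot z+\frac12(\dot x y-x\dot y)\bigr)=\beta\dot x+\alpha\dot y$ confirms this.
\item In your planned sanity check of the $x$-equation, be aware of a misprint. The first two lines of \eqref{magnetic-Heis} print the bracket as $\frac12(y'x-x'y)$, and a parenthesis is missing. The correct expression is $\frac12(x'y-xy')$, so the factor multiplying $(y'+\beta)$ is $e^3(\gamma')+\rho$. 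This agrees with the third line, with \eqref{magnetic-type2HeisSimp2}, and with \eqref{ecu2}. A literal comparison would therefore show a spurious sign mismatch.
\end{enumerate}
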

			\begin{proof}
				We work out the explicit Lagrangian. By computing one gets
				\begin{equation}\label{ecu1}
					0=\frac{d}{dt} \frac{\partial L}{\partial \dot{z}}  - \frac{\partial L}{\partial z}=z''+\frac12(x''y-xy'')-(\beta x'+\alpha y'),
				\end{equation}
				which coincides with the third equation in System \eqref{magnetic-Heis}. Also, 
				\begin{equation}\label{ecu2}
					\begin{array}{rcl}
						0=\frac{d}{dt} \frac{\partial L}{\partial \dot{x}} - \frac{\partial L}{\partial x} & = & (1+\frac{y^2}4)x''-\frac14 xyy''+\frac12 yz''+\\
						&&+ z'(y'+\beta) + y'\left(\frac12(x'y-xy')+\rho-\frac{\beta}{2} x-\frac{\alpha}{2} y\right),
					\end{array}
				\end{equation}
				
				\begin{equation}\label{ecu3}
					\begin{array}{rcl}
						0=\frac{d}{dt} \frac{\partial L}{\partial \dot{y}} - \frac{\partial L}{\partial y} & = & (1+\frac{x^2}4)y''-\frac14 xyx''+\frac12 xz''+\\
						&&+ z'(-x'+\alpha) - x'\left(\frac12(x'y-xy')+\rho-\frac{\beta}{2} x-\frac{\alpha}{2} y\right).
					\end{array}
				\end{equation}
				
				From Equation \eqref{ecu1} one has the next equalities
				$$\frac12 y (z''+\frac12(x''y-xy'')-\alpha y')=\frac{\beta}2 x'y$$
				$$\frac12 x (z''+\frac12(x''y-xy'')-\alpha y')=\frac{\alpha}2 xy'.$$
				By replacing in Equations \eqref{ecu2} and \eqref{ecu3}, respectively, and  following usual computations one obtains the two first equations of the system \eqref{magnetic-Heis}. 
			\end{proof}
		
		\begin{rem}
			Let $M=\Gamma\backslash H_3$ be a compact manifold, where $\Gamma < H_3$ is a discrete subgroup of $H_3$. Fix a point $p\in M$ and let $\pi:H_3 \to M$ be the usual projection. Then the Lagrangian above defines a local Lagrangian around $\pi(p)$. 
			
In fact, note that for any left-invariant Lorentz force we get the same magnetic equations on $M$ and on $H_3$.  Thus we obtain the same local solutions and local Lagrangians.

On the other hand,{\it There is no global Lagrangian on $M$.} 

In fact, assume we have a Lagrangian $L$ of $TM$. Locally we have the same magnetic equations as in $H_3$. Moreover any possible Lagrangian differs from another one by a  closed 1-form. 

Thus, $L - T$ is a 1-form $\theta$ such that $d\theta$ is the invariant magnetic field globally defined on $M$ and induced by the Lorentz force $F$. But by Nomizu's Theorem the cohomology of $M$ coincides with the cohomology of the Lie algebra. And  $\dim H^2(\hh_3)= 2$.

		\end{rem}
			
			\

\end{document}